\DeclareRobustCommand{\vect}[1]{\bm{#1}}
  \renewcommand{\vect}[1]{#1}%
\pgfplotsset{width=7cm,compat=1.3}
\begin{document}

\title{Large Fluctuations in Locational Marginal Prices}

\author[1]{T. Nesti}
\author[2]{J. Moriarty}
\author[3]{A. Zocca}
\author[1,4]{B. Zwart}

\affil[1]{\small CWI, Amsterdam 1098 XG, NL}
\affil[2]{\small Queen Mary University, London E1 4NS, UK}
\affil[3]{\small VU, Amsterdam 1081 HV, NL}
\affil[4]{\small TU/e, Eindhoven 5612 AZ, NL}





\maketitle 

\begin{abstract}
This paper investigates large fluctuations of Locational Marginal Prices (LMPs) in wholesale energy markets caused by 
  volatile renewable generation profiles. 
   Specifically, we study events of the form
   
\begin{equation}
\Prob \Big ( \bLMP \notin \prod_{i=1}^n [\alpha_i^-, \alpha_i^+] \Big),
\end{equation}
where $\bLMP$ is the vector of LMPs at the $n$ power grid nodes, and $\balpha^-,\balpha^+\in\Real^n$ are vectors of price thresholds specifying undesirable price occurrences.
By exploiting the structure of the supply-demand matching mechanism in power grids, we look at LMPs as deterministic piecewise affine, possibly discontinuous functions of the stochastic input process, modeling uncontrollable renewable generation. We utilize techniques from large deviations theory to identify the most likely ways for extreme price spikes to happen, and to rank the nodes of the power grid in terms of their likelihood of experiencing a price spike.
Our results are derived in the case of Gaussian fluctuations, and are validated numerically on the IEEE 14-bus test case.
\end{abstract}



\section{Introduction}

Modern-day power grids are undergoing a massive transformation, a prominent reason being the increase of intermittent renewable generation registered in the first two decades of the 21st century~\cite{Ren212019}.
The inherently uncertain nature of renewable energy sources like wind and solar photovoltaics is responsible for significant amounts of variability in power output, with important consequences for energy markets operations.
In particular, energy prices can exhibit significant volatility throughout different hours of the day, and are usually negatively correlated with the amount of renewable generation in the grid~\cite{Paraschiv2014}. 
In this paper, we focus on the Locational Marginal Pricing (LMP) mechanism~\cite{Ferc2003}, a market architecture adopted by many US energy markets. 
Under the LMP market architecture prices are location-dependent, and the presence of congested transmission lines 
causes them to vary wildly across different locations, contributing to their erratic behavior. On the other hand, under the zonal pricing mechanism~\cite{Neuhoff2011} (as in most European markets) a single price is calculated for each zone in the market. 

The topic of energy price forecasting has received a lot of attention in the forecasting community in the last 20 years, since the restructuring of energy markets from a government-controlled system to a deregulated market~\cite{Weron2014}.
Thanks to the particularly rich mathematical structure of the LMP mechanism, prediction models for LMPs are not limited to traditional statistical analysis and stochastic model-based techniques, but include structural methods exploiting the mathematical properties of the supply-demand matching process performed by grid operators, known as the Optimal Power Flow (OPF) problem~\cite{Hunueault1991}.

The relevant literature on structural prediction models can be categorized based on whether it takes an operator-centric~\cite{Bo2009,Ji2017} 
or a participant-centric~\cite{Zhou2011,Geng2016,Radovanovic2019holistic} 
point of view.

In the former case, it is assumed that the modeler has full knowledge of all the parameters defining the OPF formulation, such as generation cost functions, grid topology, and physical properties of the network. In~\cite{Bo2009}, the authors analyze the uncertainty in LMPs with respect to total load in the grid, relying on the structural property that changes in LMPs occur at the so-called critical load levels. 
In~\cite{Ji2017}, both load and generation
uncertainty is considered, and a multiparametric programming approach is proposed.

The market participant-centric approach, conversely, 
relies only on publicly available data, usually limited to grid-level, aggregated demand and generation, and nodal price data, without assuming knowledge of the network parameters. 
In~\cite{Zhou2011}, the authors utilizes the structure of the OPF formulation to infer the congestion status of transmission lines based only on zonal load levels, while
 in~\cite{Geng2016} a semi-decentralized data-driven approach, 
based on learning nodal prices as a function of nodal loads using support vector machines, is proposed. 
In~\cite{Radovanovic2019holistic}, a fully decentralized forecasting algorithm combining machine learning techniques with structural properties of the OPF is presented, and validation on the Southwest Power Pool market data results in accurate day-ahead predictions of real-time prices.

The methodologies described above have varying levels of performance in predicting expected intra-day variations, but they all have limitations when predicting extreme \textit{price spike} values. Even when assuming a a fully centralized perspective, forecasting price spikes is a notoriously
difficult problem~\cite{Lu2005}, and is mostly undertaken within the framework of zonal electricity markets~\cite{Lu2005,Hagfors2016,Paraschiv2016,Veraart2016}, 
while the corresponding problem for LMP-based markets has received less attention.
At the same time, the connection between locational marginal pricing and congestion status of the grid 
makes this problem particularly relevant for the discussion on financial transmission rights~\cite{Bushnell1999}, while a deeper understanding of the occurrence of high price events can inform network upgrades aimed at mitigating them~\cite{Wu2006}.

In this paper, we study the problem of predicting large price fluctuations in LMP-based energy markets from a centralized perspective, proposing a novel approach combining multiparametric programming techniques~\cite{Tondel2003} with large deviations theory~\cite{Dembo1998}. Large deviations techniques have been successfully used in fields such as queueing theory, telecommunication engineering, and finance~\cite{Bucklew1990}. In the recent years, they also have been applied in the context of power systems in order to study transmission line failures~\cite{Nesti2019temperature,Nesti2018emergent} and statistical properties of blackouts~\cite{Nesti2019blackout}.

In the present work we study the probability of nodal price spikes occurrences of the form
$\Prob\bigl( \bLMP \notin \prod_{i=1}^n [\alpha_i^-, \alpha_i^+]  \bigr)$,
where $\bLMP$ is the vector of Locational Marginal Prices at the $n$ grid nodes and $\balpha^-,\balpha^+\in\Real^n$ are vectors of price thresholds specifying undesirable price occurrences.
Assuming full knowledge of the power grid parameters, we first derive the deterministic function linking the stochastic input process, modeling renewable generation, to the $\bLMP$ vector. This, in turn, allows us to use large deviations theory to identify the most likely ways for extreme LMP spikes to happen as a result of unusual volatile renewable generation profiles.
The large deviations approach offers a powerful and flexible framework that holistically combines the network structure and operation paradigm (the OPF) with a stochastic model for renewable generation. This approach enables us to: i) approximate the probability of price spikes by means of solving a deterministic convex optimization problem, ii) rank the nodes of the power grids according to their likelihood of experiencing price spike events, iii) handle the \textit{multimodal} nature of the LMP's probability distribution, and iv) relax the LICQ regularity condition, an assumption that is usually required in the relevant literature~\cite{Zhou2011,Bo2009,Li2009,Bo2012}.

%
%
%
%
%


%


The rest of this paper is organized as follows.  A rigorous formulation of the problem under consideration is provided in Section~\ref{s:model}, while a connection to the field of multiparametric programming is established in Section~\ref{s:mpt}. In Section~\ref{s:ldp}, we derive our main large deviations result relating the event of a rare price spike to the solution of a deterministic optimization problem, which is further analyzed in Section~\ref{s:optimization}. We illustrate the potential of the proposed methodology in Section~\ref{s:numerics} with a case study on the IEEE 14-bus test case and draw our conclusions in Section~\ref{s:conclusion}.

\section{System model and problem formulation}\label{s:model}
The power grid is modeled as a connected graph $\Graph=\Graph(\Nodes,\Edges)$, where the set of nodes $\Nodes$ represents the $n$ buses in the system, and the set of edges $\Edges$ model the $m$ transmission lines. We assume that $\Nodes = \Nodes_g \sqcup \Nodes_{\theta}$, with $|\Nodes_g|=n_g, |\Nodes_{\theta}|=n_{\theta}$, $n_g + n_{\theta} = n$, and where $\sqcup$ denotes a disjoint union.
Each bus $i\in\Nodes_g$ houses a traditional controllable generator $\gen_i$, while each bus $i\in\Nodes_{\theta}$ houses a stochastic uncontrollable generating unit $\theta_i$. Finally, we assume that a subset of nodes $\Nodes_d\subseteq \Nodes$ houses loads, with $|\Nodes_d|=n_d$.
We denote the vectors of conventional generation, renewable generation, and demand, as the vectors 
$\bgen\in\Real_{+}^{\Nodes_g}, \bw\in\Real_{+}^{\Nodes_{\theta}}$, and $\bdem\in\Real_{+}^{\Nodes_d}$, respectively.~\footnote{The notation $x\in\Real^{A}$ indicates that the entries in the $|A|$-dimensional vector $x$ are indexed by the set $A$.} 
%
To simplify notation, we extend the vectors $\bgen,\bw,\bdem$ to $n$-dimensional vectors $\bgenext,\bwext,\bdemext\in\Real^n$ by setting $\tilde{g}_i=0$ whenever $i\notin \Nodes_g$, and similarly for $\bwext$ and $\bdemext$.
The vectors of \textit{net power injections} and \textit{power flows} are denoted by $\bpow := \bgenext + \bwext -\bdemext\in\Real^n$ and $\bflow\in\Real^m$, respectively. 

To optimally match power demand and supply while satisfying the power grid operating constraints, the Independent System Operator (ISO) solves the Optimal Power Flow (OPF~\cite{Hunueault1991}) problem and calculates the optimal energy dispatch vector $\bg^*\in\Real^{n_g}$, as well as the vector of \textit{nodal prices} $\bLMP\in\Real^{n}$, as we will describe in Section~\ref{s:model}\ref{ss:lmp}.%
In its full generality, the OPF problem is a nonlinear, nonconvex optimization problem, which is difficult to solve~\cite{Bienstockbook}. 

In this paper we focus on the widely used approximation of the latter known as DC-OPF, which is based on the \textit{DC approximation}~\cite{Purchala2005}. 
The DC approximation
relates any zero-sum vector $\bpow$ of net power injections and the corresponding power flows $\bflow$ via the linear relationship $\bflow=\bPTDF \bpow$, 
where the matrix $\bPTDF\in\Real^{m\times n}$, known as the power transfer distribution factor (PTDF) matrix, encodes information on the grid topology and parameters, cf.~Section~\ref{ss:PTDF_derivation}.
The DC-OPF can be formulated as the following quadratic optimization problem:
\begin{alignat}{5}
& \underset{\bgen\in\Real^{n_g}}{\min} & & \,\,\sum_{i=1}^{n_g} J_i(g_i)= \frac{1}{2}\bgen^{\tr} \bH\bgen + \bh^\tr \bg \label{eq:obj}\\
& \, \, \, \, \, \text{s.t.}          & & \ones^{\tr} (\bgenext+\bwext - \bdemext)=0  &  &\qquad :\lmpen \label{eq:balance} \\
&					   &  &\bfminuslimit \le \bPTDF(\bgenext +\bwext - \bdemext)\le \bfpluslimit  &  &\qquad :\bmum,\bmup \label{eq:lines}\\
&					   & & \bgminuslimit\le \bgenext \le \bgpluslimit   &  &\qquad :\btaum,\btaup  \label{eq:gen}
\end{alignat}
where the variables are defined as follows:\\

\begin{supertabular}{lp{\textwidth}}
$\bH\in\Real^{n_g\times n_g}$ & diagonal positive definite matrix appearing in the quadratic term of~\eqref{eq:obj};\\
 $\bh\in\Real^{n_g}$ & vector appearing in the linear term of the objective function~\eqref{eq:obj} ;\\
$\bPTDF \in\Real^{m\times n}$ & PTDF matrix (see definition later, in Eq.~\eqref{eq:PTDF_der});\\
$\bfminuslimit,\bfpluslimit \in\Real^m$ & vector of lower/upper transmission line limits;\\
\temphiddennote{more details here, line limits are important. Also, maybe remove this tabular notation.}
$\bgminuslimit,\bgpluslimit \in \Real^{n_g}$ & vector of lower/upper generation constraints;\\
$\lmpen\in\Real$ & dual variable of the energy balance constraint;\\
$\bmum, \bmup \in \Real^m_+$ & dual variables of the transmission line constraints~\eqref{eq:lines}, \\ 
$\btaum, \btaup \in\Real^{n_g}_+$ & dual variables of the generation constraints~\eqref{eq:gen};\\
$\ones\in\Real^n$ & $n$-dimensional vector of ones.
\\
\end{supertabular} 
Following standard practice~\cite{Sun2010},
we model the generation cost function $J_i(\cdot)$ at $i\in\Nodes_g$ as an increasing quadratic function of $g_i$, and denote by $\bJ(\bgen) := \sum_{i=1}^n J_i(g_i)$ the aggregated cost. 

\subsection{Locational marginal prices}\label{ss:lmp}
In this paper, we focus on energy markets adopting the concept of Locational Marginal Prices (LMPs) as electricity prices at the grid nodes. Under this market architecture, 
the LMP at a specific node is defined as the marginal cost of optimally supplying the next increment of load at that particular node while satisfying all power grid operational constraints, and can be calculated by solving the OPF problem in Eqs.~\eqref{eq:obj}-\eqref{eq:gen}.
\temphiddennote{there was a note about the OPF having a unique solution. Check if it is needed}
More precisely, let $\bg^*$ and $J^*=\bJ(\bg^*)$ denote, respectively, the optimal dispatch and the optimal value of the objective function of the OPF problem in Eqs.~\eqref{eq:obj}-\eqref{eq:gen}, and let $\mathcal{L}$ be the Lagrangian function. The LMP at bus $i$ is defined as the partial derivative of $J^*$ with respect to the demand $d_i$, and is equal to the partial derivative of the Lagrangian with respect to demand $d_i$ evaluated at the optimal solution: 
\begin{equation}\label{eq:LMP_def}
\LMP_i=\frac{\partial{J^*}}{\partial d_i}=\frac{\partial{\mathcal{L}}}{\partial d_i}\Bigr\rvert_{\bgen^*}.
\end{equation}
%
%
Following the derivation in~\cite{Radovanovic2019holistic},
the LMP vector can be represented as 
\begin{equation}\label{eq:LMP_dec}
\bLMP=\lmpen\ones+\bPTDF^\top\bmu\in\Real^n,
\end{equation}
where $\bmu=\bmum-\bmup$. 
Note that $\mu_{\ell}=0$ if and only if line $\ell$ is not congested, that is, if and only if $\fminuslimit_{\ell}<f_{\ell}<\fpluslimit_{\ell}$. In particular, $\mu^+_{\ell}>0$ if $f_{\ell}=\fpluslimit_{\ell}$, and $\mu^-_{\ell}<0$ if $f_{\ell}=\fminuslimit_{\ell}$.\footnote{$\mu^-,\mu^+$ cannot be both strictly positive, since lower and upper line flow constraints cannot be  simultaneously binding.}
As a consequence, if there are no congested lines, the LMPs at all nodes are equal, i.e.,  $\LMP_i=\lmpen$ for every $i=1\,\ldots,n,$ and the common value $\lmpen$ in \eqref{eq:LMP_dec} is known as the marginal \textit{energy component}. The energy component $\lmpen$ reflects the marginal cost of energy at the reference bus. 
If instead at least one line is congested, the LMPs are not all equal anymore 
and the term $\tilde{\bpi}:=\bPTDF^\top\bmu$ in Eq.~\eqref{eq:LMP_dec} is called the marginal \textit{congestion component}.
When ISOs calculate the LMPs, they also include a \textit{loss
component}, which is related to the heat dissipated on
transmission lines and is not accounted for by the DC-OPF model. The loss component is typically negligible compared to the other price components~\cite{SPPreport2016}\temphiddennote{mention $2\%$ thing?}, and its inclusion goes beyond the scope of this paper.
%

\subsection{Problem statement}\label{ss:prob_statement}
In this paper, we adopt a \textit{functional} perspective, i.e., we view the uncontrollable generator as a variable parameter, or input, of the OPF. 
In particular, we are interested in a setting where the objective function, PTDF matrix, nodal demand $\bdem$, line limits and generation constraints are assumed to be known and fixed. Conversely, the uncontrollable generation $\bw\in\Real^{\Nodes_{\theta}}$ corresponds to a \textit{variable parameter} of the problem, upon which the solution of the OPF problem in Eqs.~\eqref{eq:obj}-\eqref{eq:gen} (to which we will henceforth refer as $\OPF(\btheta)$), and thus the LMP vector, depend.

In other words, the LMP vector is a deterministic function of $\btheta$
\begin{equation}\label{eq:mapping}
\Real^{n_{\theta}} \supseteq \bTheta \ni \btheta \rightarrow \bLMP(\btheta)\in\Real^n,
\end{equation}
where $\bTheta \subseteq \Real^{n_{\theta}}$ is the \textit{feasible parameter space} of the OPF, i.e., the set of parameters $\btheta$ such that $\OPF(\btheta)$ is feasible. 
In particular, we model $\btheta$ as a non-degenerate multivariate Gaussian vector $\btheta_{\eps} \sim \mathcal{N}_{n_{\theta}}(\bmuth,\eps\bSigmath)$,
where the parameter $\eps>0$ quantifies the magnitude of the noise.
The mean $\bmuth\in \mathring{\bTheta}$ (where $\mathring{A}$ denotes the interior of the set $A$) of the random vector $\btheta$ is interpreted as the expected, or nominal, realization of renewable generation for the considered time interval. Furthermore, we assume that $\bSigmath$ is a known positive definite matrix, and consider the regime where $\eps\to 0$.
In view of the mapping~\eqref{eq:mapping}, $\bLMP$ is a $n$-dimensional random vector whose distribution depends on that of $\btheta$, and on the deterministic mapping $\btheta \to \bLMP(\btheta)$.
We assume that the $\bLMP$ vector corresponding to the expected renewable generation $\bmuth$ is such that
\begin{equation}\label{eq:condition_mu}
\bLMP(\bmuth) \in \mathring{\Pi},
 \end{equation}
where $\Pi:=\prod_{i=1}^n [\alpha_i^-, \alpha_i^+]$, and $\balpha^{-},\balpha^{+}\in\Real^n$ are vectors of price thresholds. We are interested in the event $Y=Y(\balpha^{-},\balpha^{+})$ of \textit{anomalous price fluctuations} (or \textit{price spikes}) defined as
\begin{align}
	Y(\balpha^{-},\balpha^{+}) &= \bigl\{ \btheta\in \bTheta \,:\, \bLMP (\btheta) \notin \prod_{i=1}^n [\alpha_i^-, \alpha_i^+]  \bigr\} \label{eq:PriceSpike}\\
	&=\bigcup_{i=1}^n \{  \btheta\in \bTheta \,:\, \LMP_i(\btheta) < \alpha_i^{-} 
	\text{ or }  \LMP_i(\btheta)  > \alpha_i^{+}\},\label{eq:PriceSpike_2}
\end{align}
which, in view of Eq.~\eqref{eq:condition_mu} and the regime $\eps \to 0$, is a \textit{rare event}. Without loss of generality, we only consider thresholds $\balpha^{-},\balpha^{+}$ such that the event $Y(\balpha^{-},\balpha^{+})$ has a non-empty interior in $\Real^{n_{\theta}}$. Otherwise, the fact that $\bmuth$ is non degenerate would imply 
$\Prob \bigl(Y(\balpha^{-},\balpha^{+})\bigr) = 0$.  

We observe that the above formulation of a price spike event is quite general, and can cover different application scenarios, as we now outline.
For example, if $\alpha = \alpha^{+}_i=-\alpha^{-}_i > 0$ for all $i$, then the price spike event becomes 
\[Y(\alpha) =\{\btheta\in\Theta\, : \, \norm{\bLMP}=\max_{i=1,\ldots,n} |\LMP_i| > \alpha\},\]
and models the occurrence of a price spike larger than a prescribed value $\alpha$. On the other hand, if we define 
$\balpha^{-}= \bLMP(\bmuth) -\bbeta$ and $\balpha^{+}= \bLMP(\bmuth) + \bbeta$, for $\bbeta\in\Real_{+}^{n}$, the spike event
\[Y(\bbeta) = \bigcup_{i=1}^n\{\btheta\in\bTheta\, : \, |\LMP_i -\LMP_i(\bmuth)|> \beta_i\},\]
models the event of any $\LMP_i$ deviating from its nominal value $\LMP_i(\bmuth)$ more than $\beta_i>0$. 
Moreover, by setting  $\balpha^{-}= \bLMP(\bmuth) -\bbeta^{-}$ and $\balpha^{+}= \bLMP(\bmuth) + \bbeta^{+}$, $\bbeta^{-},\bbeta^{+}\in\Real^{m,+}$ and $\bbeta^{-}\neq \bbeta^{+}$, we can weigh differently negative and positive deviations from the nominal values.

We remark that \textit{negative} price spikes are also of interest~\cite{Gerster2016,Gonzales2017} and can be covered in our framework, by choosing the threshold vectors $\balpha^{-},\balpha^{+}$ accordingly.
Finally, note that we can study price spikes at a more granular level by 
restricting the union in Eq.~\eqref{eq:PriceSpike_2} to a particular subset of nodes  $\widetilde{\Nodes}\subseteq \Nodes$.
%
%
%

\subsection{Derivation of the PTDF matrix $\bPTDF$}\label{ss:PTDF_derivation}
Choosing an arbitrary but fixed orientation of the transmission lines, the network topology is described by the \textit{edge-vertex incidence matrix} $\binc\in\Real^{m\times n}$ defined as
 $\inc_{\ell, i}=1$ if $\ell=(i,j)$, $\inc_{\ell, i}=-1$ if $\ell=(j,i)$, and $\inc_{\ell, i} = 0$ otherwise.
We associate to every line $\ell \in \Edges$ a \textit{weight} $\weight_{\ell}$, which we take to be equal to the inverse of the reactance $x_{\ell}>0$ of that line, i.e., $\weight_{\ell}=x_{\ell}^{-1}$~\cite{Bienstockbook}. Let $\bdiagdc\in\Real^{m\times m}$ be the diagonal matrix containing the line weights $\bdiagdc=\diag(\weight_{1}^,\dots, \weight_{m})$.
The network topology and weights are simultaneously encoded in the \textit{weighted Laplacian matrix} of the graph $\Graph$, defined as $\blap = \binc^\tr \bdiagdc \binc$.
Finally, by setting node $1$ as the reference node,
the PTDF matrix is given by
\begin{equation}\label{eq:PTDF_der}
\bPTDF :=[\zero\,\, \bdiagdc\bincsub\blapsub^{-1}],
\end{equation}
where $\bincsub\in\Real^{m\times(n-1)}$ is the matrix obtained by deleting the first columns of $\binc$, $\blapsub^{(n-1)\times (n-1)}$ by deleting the first row and column of $\blap$.

\section{Multiparametric programming}\label{s:mpt}

As discussed in Section~\ref{s:model}\ref{ss:prob_statement}, LMPs can be thought as deterministic functions of the parameter $\btheta$. Therefore, in order to study the distribution of the random vector $\bLMP$, we need to investigate the structure of the mapping $\btheta \to \bLMP(\btheta)$.
We do this using the language of Multiparametric Programming Theory (MPT)~\cite{Tondel2003}, which is concerned with the study of optimization problems which depend on a vector of parameters, and aims at analyzing the impact of such parameters on the outcome of the problem, both in terms of primal and dual solutions. In our setting, we stress that the parameter $\btheta $
models the uncontrollable renewable generation. Hence, the problem $\OPF(\btheta)$ can be formulated as a standard Multiparametric Quadratic Program (MPQ) as follows:
\begin{align}
\underset{\bgen \in \Real^{n_g}}{\min} &\quad  \frac{1}{2}\bgen^\tr \bH^\tr \bgen + \bgen^\tr \bh \qquad \label{eq:OPF_MQP_1} \\
 \text{s.t.}        &\quad \bA\bgen\le \bb+\bE\btheta,   \label{eq:OPF_MQP_2} 
\end{align}
where $\bA\in\Real^{(2+2m+2n_g)\times n_g},\bE\in\Real^{(2+2m+2n_g)\times n_{\theta}},\bb\in\Real^{(2+2m+2n_g)}$ are defined as
\begin{equation}\label{eq:MQP}
\bA=\begin{bmatrix}
\phantom{-}\mathbf{1}_{n_g}^\tr\\
-\mathbf{1}_{n_g}^\tr\\
 \phantom{-}\bPTDF_{\Nodes_g}\\
 -\bPTDF_{\Nodes_g}\\
 \phantom{-}\bI_{n_g} \\
-\bI_{n_g}\\
\end{bmatrix},\quad
\bb=\begin{bmatrix}
  \phantom{-}\ones^\tr \bdem\\
  -\ones^\tr\bdem\\
  \phantom{-}\bPTDF_{\Nodes_d} \bdem +\bfpluslimit\\
  -\bPTDF_{\Nodes_d} \bdem -\bfminuslimit\\
  \phantom{-}\bgpluslimit\\
  -\bgminuslimit\\ 
\end{bmatrix},\quad
\bE=\begin{bmatrix}
 -\ones_{n_{\theta}}^\tr   \\
   \phantom{-}\ones_{n_{\theta}}^\tr \\
-\bPTDF_{\Nodes_{\theta}}   \\
    \phantom{-}\bPTDF_{\Nodes_{\theta}}  \\
   \phantom{-}\zeros_{n_{\theta}}  \\
   \phantom{-}\zeros_{n_{\theta}} \\
   \end{bmatrix}.
\end{equation}
For $k\in\Nat$, denote by $\ones_k, \zeros_k\in\Real^k$ and $\bI_{k}\in\Real^{k\times k}$ the vector of ones, zeros, and the identity matrix of dimension $k$, respectively.  Moreover, $\bPTDF_{\Nodes_g}\in\Real^{m\times n_g}$ and $\bPTDF_{\Nodes_{\theta}}\in\Real^{m\times n_{\theta}}$ denote the submatrices of $\bPTDF$ obtained by selecting only the columns corresponding to nodes in $\Nodes_g$ and $\Nodes_{\theta}$, respectively.

A key result in MPT~\cite{Tondel2003} is that
the feasible parameter space $\bTheta\subseteq \Real^{n_{\theta}}$ of the problem Eqs.~\eqref{eq:OPF_MQP_1}-\eqref{eq:OPF_MQP_2} can be partitioned into a finite number of convex polytopes, each corresponding to a different \textit{optimal partition}, i.e., a grid-wide state vector that indicates the saturated status of generators and congestion status of transmission lines. 
\begin{definition}[Optimal Partition]\label{def:partition}
Given a parameter vector $\btheta\in\bTheta$, 
let $\bg^*=\bg^*(\btheta)$ denote the optimal generation vector obtained by solving the problem defined by Eqs.~\eqref{eq:OPF_MQP_1}-\eqref{eq:OPF_MQP_2}. Let $\mathcal{J}$ denote the index set of constraints in Eq.~\eqref{eq:OPF_MQP_2}, with $|\mathcal{J}|=2+2m+2n$. 
The \textit{optimal partition} of $\mathcal{J}$ associated with $\btheta$ is the partition 
$\mathcal{J} = \B(\btheta)\sqcup \B^{\complement}(\btheta)$, 
with $\mathcal{B}(\btheta)= \{i\in\mathcal{J}\,\,|\, \bA_i\bgen^*=\bb+\bE_i\theta\}$ and $
\mathcal{B}^\complement(\btheta)=\{i\in\mathcal{J}\,|\, \bA_i\bg^*<\bb+\bE_i\btheta\}$.
\end{definition} 
The sets $\mathcal{B}$ and $\mathcal{B}^\complement$, respectively, correspond to binding and non-binding constraints of the OPF and, hence, identify congested lines and nonmarginal generators. With a minor abuse of notation, we identify the optimal partition $(\B,\B^\complement)$ with the corresponding set of binding constraints $\B$. Given an optimal partition $\B$, let $\bA_{\B},\bE_{\B}$ denote the submatrices of $\bA$ and $\bE$ containing the rows $\bA_i,\bE_i$ indexed by $i\in\B$, respectively. 

\begin{remark}\label{rm:redundant}
The energy balance equality constraint~\eqref{eq:balance} in the original OPF formulation is rewritten as two inequalities indexed by $i=1,2$ in Eq.~\eqref{eq:OPF_MQP_2}, which are always binding and read
$\bA_i\bg^*=\bb_i+\bE_i\btheta$, $i=1,2$.
Looking at Eq.~\eqref{eq:MQP}, we see that the two equations $\bA_i\bg^*=\bb_i+\bE_i\btheta, \, i=1,2,$ are identical, and thus one of them is redundant. In the rest of this paper, we eliminate one of the redundant constraints from the set $\B$, namely the one corresponding to $i=2$. Therefore, we write $\B = \{1\} \sqcup \Bcong \sqcup \Bsat$, where $\Bcong\subseteq\{3,\ldots,2+2m\}$ describes the congestion status of transmission lines, and $\Bsat\subseteq \{2+2m+1,2+2m+2n\}$ describe the saturated status of generators.
 \end{remark}
\begin{definition}[LICQ]\label{def:licq}
Given an optimal partition $\B$, we say that the \textit{linear independent constraint qualification} (LICQ) holds if
the matrix $\bA_{\B}\in\Real^{|\B|\times n}$ has full row rank.
\end{definition}
Since there is always at least one binding constraint, namely $i=1$ (corresponding to the power balance constraint), we can write $|\B| = 1 + |\B\pr|$, where $\B\pr = 
\Bcong \sqcup \Bsat$ contains the indexes of binding constraints corresponding to line and generator limits. 
Since line and generation limits cannot be binding both on the positive and negative sides, we have that $|\B\pr| = |\Bsat| + |\Bcong|\le n_g + m$. Moreover, it is observed in~\cite{Zhou2011} that the row rank of $\bA_{\B}$ is equal to 
$ \min(1+|\Bsat|+|\Bcong|, n_g)$,
implying that the LICQ condition is equivalent to  
\begin{equation}\label{eq:LICQ_eq}
1+|\Bsat|+|\Bcong|\le n_g.
\end{equation}
The following theorem is a standard result in MPT, see~\cite[Theorem 1]{Tondel2003}. It states that there exist $M$ affine maps defined in the interiors of the critical regions
\[\mathring{\bTheta_k} \ni \btheta \to \bLMP_{\rvert \mathring{\bTheta_k}} (\btheta) = \bCtildek \btheta + \bctildek,\, k=1\ldots,M\]
where $\bCtildek,\bctildek$ are suitably defined matrices and vectors. Moreover, if LICQ holds for every $\btheta\in\bTheta$, then the maps agree on the intersections between the regions $\bTheta_k$'s, resulting in an overall continuous map 
$
\bTheta \ni \btheta \to \bLMP(\btheta) \in \Real^n.
$
\begin{theorem}\label{th:mpt1}
Assume that $\bH$ is positive definite, $\bTheta$ a fully dimensional compact set in $\Real^{n_{\theta}}$, and that the LICQ regularity condition is satisfied for every $\btheta\in\bTheta$. Then, $\bTheta$ can be covered by the union of a finite number $M$ 
of fully-dimensional compact convex polytopes $\bTheta_1,\ldots,\bTheta_M$, referred to as \textit{critical regions}, such that: 
(i) their interiors are pairwise disjoint
$\mathring{\bTheta}_k \cap \mathring{\bTheta}_h = \emptyset$ for every $k\neq h$,
and each interior $\mathring{\bTheta}_k $ corresponds to the largest set of parameters yielding the same optimal partition;
(ii) within the interior of each critical region $\mathring{\bTheta}_k$, the optimal generation $\bgen^*$ and the associated $\bLMP$ vector are affine functions of $\btheta$,
(iii) the map $\bTheta\ni\btheta \to \bLMP(\btheta)$ defined over the entire parameter space is piecewise affine and continuous.%
\end{theorem}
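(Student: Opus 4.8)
The plan is to proceed through the Karush--Kuhn--Tucker (KKT) conditions of the strictly convex program~\eqref{eq:OPF_MQP_1}--\eqref{eq:OPF_MQP_2}, using the active (binding) set as the combinatorial object that indexes the pieces. Since $\bH\succ 0$, the objective is strongly convex, so for each $\btheta\in\bTheta$ the primal optimizer $\bg^*(\btheta)$ is unique and the KKT conditions are both necessary and sufficient for optimality. Writing $\bm{\lambda}\ge\zero$ for the multipliers attached to the constraints $\bA\bgen\le\bb+\bE\btheta$ (these collect $\lmpen,\bmum,\bmup,\btaum,\btaup$), stationarity reads $\bH\bg^*+\bh+\bA^\tr\bm{\lambda}=\zero$, together with primal feasibility, dual feasibility $\bm{\lambda}\ge\zero$, and complementary slackness.

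First I would fix an optimal partition $\B$ for which LICQ holds and solve the KKT system explicitly. For indices in $\B$ the constraints are tight, $\bA_\B\bg^*=\bb_\B+\bE_\B\btheta$, while $\bm{\lambda}_i=0$ off $\B$. Eliminating $\bg^*=-\bH^{-1}(\bh+\bA_\B^\tr\bm{\lambda}_\B)$ from stationarity and substituting into the tight constraints yields $\bA_\B\bH^{-1}\bA_\B^\tr\,\bm{\lambda}_\B = -(\bb_\B+\bE_\B\btheta+\bA_\B\bH^{-1}\bh)$. Because $\bH\succ0$ and $\bA_\B$ has full row rank under LICQ, the Schur-type matrix $\bA_\B\bH^{-1}\bA_\B^\tr$ is invertible, so $\bm{\lambda}_\B(\btheta)$, and hence $\bg^*(\btheta)$ and the LMP vector $\bLMP=\lmpen\ones+\bPTDF^\tr\bmu$ via~\eqref{eq:LMP_dec}, are explicit \emph{affine} functions of $\btheta$. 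This produces the matrices $\bCtildek,\bctildek$ and establishes claim (ii).

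Next I would identify the critical region. By KKT sufficiency, the partition $\B$ is optimal at $\btheta$ exactly when the affine candidate satisfies primal feasibility on the inactive constraints, $\bA_i\bg^*(\btheta)\le\bb_i+\bE_i\btheta$ for $i\notin\B$, and dual feasibility $\bm{\lambda}_\B(\btheta)\ge\zero$. Since both expressions are affine in $\btheta$, these are finitely many linear inequalities, so the region $\bTheta_\B$ is a convex polytope, and intersecting with the compact $\bTheta$ makes it compact. There are at most $2^{|\mathcal{J}|}$ partitions, giving finitely many nonempty regions, which I would relabel $\bTheta_1,\dots,\bTheta_M$ after discarding those with empty interior; since every $\btheta\in\bTheta$ admits an optimizer (feasibility plus coercivity of the objective), whose active set places $\btheta$ in the corresponding region, the closed regions cover $\bTheta$. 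Uniqueness of $\bg^*(\btheta)$ from strong convexity, together with uniqueness of $\bm{\lambda}(\btheta)$ under LICQ, forces the optimal partition to be single-valued; two distinct partitions can therefore be simultaneously optimal only where some defining inequality holds with equality, i.e.\ on a common boundary. This yields the disjointness of interiors and the maximality characterization in~(i), the discarded lower-dimensional regions lying in the boundaries of the full-dimensional ones.

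The delicate point is the global continuity in~(iii): within each region the map is affine, but one must check the affine pieces agree on shared facets. I would argue this from stability of the unique primal--dual solution. Strong convexity makes $\bg^*(\btheta)$ continuous (indeed Lipschitz), and under LICQ the multiplier $\bm{\lambda}(\btheta)$ is the unique solution of the linear stationarity system, hence continuous as well; equivalently, the value function $J^*(\btheta)$ is convex and differentiable under LICQ, so it is $C^1$ and its gradient $\bE^\tr\bm{\lambda}(\btheta)$ varies continuously. Concretely, as $\btheta$ crosses the facet separating two adjacent regions, the partitions differ only by constraints transitioning between binding and non-binding; on the facet these constraints are binding with zero multiplier, so both affine formulas return the same limiting values of $\bg^*$ and $\bm{\lambda}$, and thus of $\bLMP$. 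Gluing the continuous affine pieces gives the piecewise-affine continuous map, completing~(iii). The main obstacle is precisely this continuity-across-facets step: the affine algebra and the polytopal geometry are routine consequences of the KKT system, whereas matching the dual multipliers on lower-dimensional interfaces is where LICQ and strong convexity are genuinely used, and where degenerate partitions (ruled out here by the LICQ hypothesis) would otherwise break continuity.
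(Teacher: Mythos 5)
The paper never proves Theorem~\ref{th:mpt1}: it is quoted as a standard result of multiparametric programming, with the proof deferred entirely to~\cite{Tondel2003}, so there is no internal proof to compare against. Your KKT/active-set argument is precisely the standard proof underlying that citation --- explicit elimination of the stationarity system under LICQ to obtain affine primal--dual formulas (giving (ii)), critical regions cut out as polyhedra by primal feasibility of the inactive constraints and dual feasibility of $\bm{\lambda}_{\B}(\btheta)$ (giving (i)), and uniqueness of the multipliers under LICQ forcing adjacent affine pieces to agree on shared facets (giving (iii)) --- and it is correct in its essentials. Two details deserve to be made explicit. First, in the MPQ form \eqref{eq:OPF_MQP_1}--\eqref{eq:OPF_MQP_2} the energy-balance equality is split into two always-binding inequalities whose rows of $\bA$ are $\ones_{n_g}^\tr$ and $-\ones_{n_g}^\tr$; hence $\bA_{\B}$ can have full row rank, and $\bA_{\B}\bH^{-1}\bA_{\B}^\tr$ be invertible as your elimination step requires, only after the redundant row is dropped from $\B$ as in Remark~\ref{rm:redundant} --- your proof implicitly assumes this convention and should say so. Second, your claim that the discarded empty-interior regions lie in the boundaries of the full-dimensional ones (so that the latter still cover $\bTheta$) needs a one-line argument: there are finitely many regions, each empty-interior region is contained in a hyperplane and hence has measure zero, so the union of the full-dimensional regions is dense in $\bTheta$ and its finitely many closed pieces cover $\bTheta$. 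Neither point is a genuine gap; both are the places where degeneracy is quietly handled in the cited literature.
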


\subsection{Relaxing the LICQ assumption}
One of the assumptions of Theorem~\ref{th:mpt1}, which is standard in the literature~\cite{Zhou2011,Bo2009,Li2009,Bo2012}, is that the LICQ condition holds for every $\btheta\in\bTheta$.
In particular, this means that LICQ holds in the interior of two neighboring regions, which we denote as $\mathring{\bTheta}_i$ and $\mathring{\bTheta}_j$. Let $\Hyp$ be the hyperplance separating $\mathring{\bTheta}_i$ and $\mathring{\bTheta}_j$. The fact that LICQ holds at $\mathring{\bTheta}_i$ implies that, if $\{i_1,\ldots,i_q\}$ are the binding constraints at optimality in the OPF for $\btheta\in\mathring{\bTheta}_i$,
  then in view of Eq.~\eqref{eq:LICQ_eq} we have $q\le n_g$, where we recall that $n_g$ is the number of decision variables in the OPF (i.e., the number of controllable generators).

Requiring LICQ to hold everywhere means that, in particular, it must hold in the 
common facet between regions.  As we move from $\mathring{\bTheta}_k$ on to the common facet $\F = \bTheta_i \cap \Hyp$ between regions $\bTheta_i$ and $\bTheta_j$, which has dimension $n_{\theta}-1$, there could be an additional constraint becoming active (coming from the neighboring region $\bTheta_j$), and therefore the LICQ condition implies $q+1\le n_g$. In general, critical regions can intersect in faces of dimensions $1,\ldots,n_{\theta}-1$, and enforcing LICQ to hold on all these faces could imply the overly-conservative assumption $q+n_{\theta}-1\le n_g$. 

In what follows, we relax the assumptions of Theorem~\ref{th:mpt1} by allowing LICQ to be violated on the union of these lower-dimensional faces
\begin{equation}\label{eq:Theta0}
\bTheta_{\circ} :=\bTheta \setminus  \bigcup_{k=1}^M \mathring{\bTheta}_k.
\end{equation}
Since this union has zero $n_{\theta}$-dimensional Lebesgue measure, the event $\btheta\in\bTheta_{\circ}$ rarely happens in practice, and thus is usually ignored in the literature, but it does cause a technical issue that we now address.
If LICQ is violated on $\btheta \in \bTheta_{\circ}$, the Lagrange multipliers of the OPF, and thus the LMP, need not be unique. Therefore, the map $\btheta \to \bLMP(\btheta)$ is not properly defined on $\bTheta_{\circ}$.
 In order to extend the map from $\bigcup_{k=1}^M \mathring{\bTheta}_k$ to the full feasible parameter space $\bTheta$, we incorporate a tie-breaking rule to consistently choose between the possible LMPs. 
Following~\cite{Tang2013nash}, we break ties by using the lexicographic order. 
This choice defines the LMP function over the whole feasible parameter space $\bTheta$, but may introduce jump discontinuities on the zero-measure set $\bTheta_{\circ}$. In the next section, we address this technicality and formally derive our main large deviations result. 
%
\section{Large deviations results}\label{s:ldp}
\begin{proposition}\label{prop:ldp}
Let $\btheta_{\eps} \sim \mathcal{N}_{n_{\theta}}(\bmuth,\eps\bSigmath)$ be a family of nondegenerate $n_{\theta}$-dimensional Gaussian r.v.'s indexed by $\eps>0$. Assume that the LICQ condition is satisfied for all $\btheta\in\bTheta\setminus  \bTheta_{\circ}$. Consider the event 
\[Y=Y(\balpha^-,\balpha^+) = \bigcup_{i=1}^n \{  \btheta\in \bTheta \,:\, \LMP_i(\btheta) \notin [\alpha_i^{-},\alpha_i^{+}]\},
\]
defined in Eq.~\eqref{eq:PriceSpike}, assume that the interior of $Y$ is not empty~\footnote{~If $\mathring{Y}=\emptyset$, then trivially $\Prob (\btheta_{\eps} \in Y )= 0$.} and that 
\begin{equation}
\bLMP(\bmuth) \in \mathring{\Pi},\quad \Pi:=\prod_{i=1}^n [\alpha_i^-, \alpha_i^+].
 \end{equation}
Then, the family of random vectors $\{\btheta_{\eps}\}_{\eps>0}$ satisfies 
\begin{equation}\label{eq:ldp_th}
\lim_{\eps \to 0} \eps \log \Prob (\btheta_{\eps} \in Y) = -\inf_{\btheta \in Y} I(\btheta),
\end{equation}
where $\I(\btheta) = \frac{1}{2}(\btheta -\bmuth)^\top \bSigmath^{-1} (\btheta -\bmuth).$. 
\end{proposition}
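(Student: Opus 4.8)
The plan is to obtain the stated limit from the classical large deviations principle (LDP) for Gaussian families, so that the real content is showing that the price-spike event $Y$ is a continuity set for the rate function $I$, making the LDP upper and lower bounds coincide. First I would record the Gaussian LDP for $\{\btheta_{\eps}\}$. Since $\btheta_{\eps}\sim\mathcal N_{n_{\theta}}(\bmuth,\eps\bSigmath)$, a direct computation gives the scaled cumulant generating function $\Lambda(\bm{\lambda})=\lim_{\eps\to0}\eps\log\mathbb E\bigl[e^{\langle\bm{\lambda},\btheta_{\eps}\rangle/\eps}\bigr]=\langle\bm{\lambda},\bmuth\rangle+\tfrac12\bm{\lambda}^{\top}\bSigmath\bm{\lambda}$, which is finite and differentiable everywhere; hence by the Gärtner--Ellis theorem (or directly from the explicit Gaussian density, cf.~\cite{Dembo1998}) the family satisfies an LDP at speed $\eps$ with good rate function $I(\btheta)=\sup_{\bm{\lambda}}[\langle\bm{\lambda},\btheta\rangle-\Lambda(\bm{\lambda})]=\tfrac12(\btheta-\bmuth)^{\top}\bSigmath^{-1}(\btheta-\bmuth)$. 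Because $\bSigmath\succ0$, the function $I$ is strictly convex, continuous, and coercive with compact sublevel sets, so $\inf_A I$ is attained on every nonempty closed $A$.

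The obstacle is that $Y$ is neither open nor closed: the tie-breaking rule makes $\btheta\mapsto\bLMP(\btheta)$ discontinuous on $\bTheta_{\circ}$, so the general LDP only yields $-\inf_{\mathring{Y}}I\le\liminf_{\eps\to0}\eps\log\Prob(\btheta_{\eps}\in Y)\le\limsup_{\eps\to0}\eps\log\Prob(\btheta_{\eps}\in Y)\le-\inf_{\overline{Y}}I$. To collapse these bounds I would first discard the null set. On each open region $\mathring{\bTheta}_k$ the map equals the affine function $\bCtildek\btheta+\bctildek$, so $\widetilde{Y}:=Y\cap\bigcup_k\mathring{\bTheta}_k=Y\setminus\bTheta_{\circ}$ is a finite union of intersections of open half-spaces with the open sets $\mathring{\bTheta}_k$, hence open in $\Real^{n_{\theta}}$. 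Since $\bTheta_{\circ}$ has zero Lebesgue measure and each $\btheta_{\eps}$ has a density, $\Prob(\btheta_{\eps}\in Y)=\Prob(\btheta_{\eps}\in\widetilde{Y})$ for every $\eps>0$. Applying the LDP lower bound to the open set $\widetilde{Y}$ and the upper bound to its closure, and using that $\inf_{\widetilde{Y}}I=\inf_{\overline{\widetilde{Y}}}I$ for a continuous $I$ on an open set, I get that the limit exists and equals $-\inf_{\widetilde{Y}}I$.

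It then remains to identify $\inf_{\widetilde{Y}}I$ with $\inf_{Y}I$. As $\widetilde{Y}\subseteq Y$ gives $\inf_{Y}I\le\inf_{\widetilde{Y}}I$, the crux is the reverse inequality, which follows once I establish $Y\subseteq\overline{\widetilde{Y}}$ (whence $\inf_{\widetilde{Y}}I=\inf_{\overline{\widetilde{Y}}}I\le\inf_{Y}I$). This is precisely where the discontinuities on $\bTheta_{\circ}$ must be controlled and where the hypothesis $\mathring{Y}\neq\emptyset$ enters. For $\btheta_0\in Y\cap\bTheta_{\circ}$ I would use that the full-dimensional regions cover the parameter space, $\bTheta=\bigcup_k\overline{\mathring{\bTheta}_k}$, to approximate $\btheta_0$ by points $\btheta_j\in\bigcup_k\mathring{\bTheta}_k$; the piecewise-affine, generically transversal structure of the spike constraints $\{\LMP_i\gtrless\alpha_i^{\pm}\}$ together with $\mathring{Y}\neq\emptyset$ should let me select the $\btheta_j$ inside $\widetilde{Y}$, so that $I(\btheta_j)\to I(\btheta_0)$ by continuity. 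Verifying that no spike point lying on the discontinuity set is a strict minimizer of $I$ unreachable from the open spike region $\widetilde{Y}$ is the main technical difficulty of the whole argument; once it is settled, $\inf_{\widetilde{Y}}I=\inf_{Y}I$ and the proposition follows.

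Finally I would confirm that the rate is strictly negative, so the event is genuinely rare. Since $\bLMP(\bmuth)\in\mathring{\Pi}$ and the map is affine, hence continuous, at the interior point $\bmuth$, a neighbourhood of $\bmuth$ maps into $\mathring{\Pi}$ and is therefore disjoint from $\widetilde{Y}$; thus $\bmuth\notin\overline{\widetilde{Y}}$ and $\inf_{Y}I=\inf_{\widetilde{Y}}I>0$, completing the proof.
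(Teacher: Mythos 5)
Your proposal takes essentially the same route as the paper's proof: you discard the null set $Y\cap\bTheta_{\circ}$ using the fact that $\btheta_{\eps}$ has a density, apply the Gaussian LDP (G\"artner--Ellis in your write-up, Cram\'er's theorem in the paper's --- immaterial), and collapse the upper and lower LDP bounds by noting that $\widetilde{Y}=Y\setminus\bTheta_{\circ}$ (the paper's $Y_*$) is open, because $\bLMP$ is affine on each $\mathring{\bTheta}_k$, and that $I$ is continuous, so $\inf_{\widetilde{Y}}I=\inf_{\overline{\widetilde{Y}}}I$. Up to the conclusion $\lim_{\eps\to 0}\eps\log\Prob(\btheta_{\eps}\in Y)=-\inf_{\btheta\in\widetilde{Y}}I(\btheta)$, your argument is complete and coincides step by step with the paper's.

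Where you go beyond the paper is the final identification $\inf_{\widetilde{Y}}I=\inf_{Y}I$, which you rightly isolate as the crux and honestly leave unresolved. Two comments. First, the paper does not prove this step either: its proof ends with the infimum over $Y_*$ and the statement is phrased with the infimum over $Y$, with no reconciliation of the two. Second, your sketch (approximating points of $Y\cap\bTheta_{\circ}$ from within $\widetilde{Y}$ via a transversality argument) cannot be completed from the stated hypotheses: the value $\bLMP(\btheta_0)$ at a point $\btheta_0\in\bTheta_{\circ}$ is fixed by the lexicographic tie-break, which need not agree with any of the limits of the affine maps from the adjacent regions; so one can in principle have $\btheta_0\in Y$ (tie-broken LMP outside $\Pi$) while a full neighborhood of $\btheta_0$, minus $\bTheta_{\circ}$, maps into $\mathring{\Pi}$, i.e.\ $\btheta_0\notin\overline{\widetilde{Y}}$, and possibly $I(\btheta_0)<\inf_{\widetilde{Y}}I$. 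The hypothesis $\mathring{Y}\neq\emptyset$ does not exclude this, since it constrains $\widetilde{Y}$ elsewhere and says nothing about the tie-break on $\bTheta_{\circ}$. The honest reading is that the proposition is proven (both by you and by the paper) with the infimum over $Y\setminus\bTheta_{\circ}$; the discrepancy is invisible to the probability, which never charges the null set $\bTheta_{\circ}$, but it can in principle change the value of the variational problem. So the difficulty you flagged is a genuine gap in the paper's statement-versus-proof, not a defect of your approach relative to it. (Your closing positivity remark is not required by the proposition, and it too implicitly assumes $\bLMP$ is continuous at $\bmuth$, i.e.\ that $\bmuth$ does not lie on $\bTheta_{\circ}$.)
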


\begin{proof}
For notational compactness, in the rest of the proof we will write $Y$ without making explicit its dependence on $(\balpha^-,\balpha^+)$. Defining  $Z := \bigcup_{i=1}^n \{\btheta \in\Real^{n_{\theta}} \,:\ \LMP_i(\btheta) \notin [\alpha_i^{-},\alpha_i^{+}]\}$, the event $Y$ can be decomposed as the disjoint union
$Y = Y_*\cup Y_{\circ}$, where
\begin{align}
Y_*=
\bigcup_{k=1}^M \mathring{\bTheta}_k \cap Z,\quad
 Y_{\circ} =  \bTheta_{\circ} \cap Z,
\end{align}
and $Y_{\circ} \subseteq \bTheta_{\circ} = \bTheta \setminus  \bigcup_{k=1}^M \mathring{\bTheta}_k$ is a zero-measure set.
As $\btheta_{\eps}$ is non nondegenerate, it has a density $f$ with respect to the $n_{\theta}$-dimensional Lebesgue measure in $\Real^n_{\theta}$. Since the $n_{\theta}$-dimensional Lebesgue measure of $ Y_{\circ}$ is zero, we have
\[\Prob (\btheta_{\eps} \in Y_{\circ})=\int_{\bx\in Y_{\circ}} f(\bx) d\bx =0\]
and $\Prob (\btheta_{\eps} \in Y) = \Prob (\btheta_{\eps} \in Y_*)$.
As a consequence, we can restrict our analysis to the event $Y_*$.
Thanks to Cramer's theorem in $\Real^{n_{\theta}}$~\cite{Dembo1998}, we have 
\begin{align}\label{eq:LDP-general}
-\inf _{\btheta \in \mathring{Y}_* }I(\btheta)
\leq &\liminf_{\eps\to 0}\eps\log {\big (}{\Prob}(\btheta_{\eps} \in Y_*){\big )}\\
\leq &\limsup _{\eps\to 0}\eps\log {\big (}{\Prob}(\btheta_{\eps} \in Y_*){\big )}\leq -\inf_{\btheta \in \overline{Y_*}}I(\btheta),
\end{align}
where $I(\btheta)$ is the Legendre transform of the log-moment generating function of $\btheta_{\eps}$. It is well-known (see, for example,~\cite{Touchette2009}) that when $\btheta_{\eps}$ is Gaussian then $\I(\btheta) = (\btheta -\bmuth)^\tr \bSigmath^{-1} (\btheta -\bmuth)$.
In order to prove~\eqref{eq:ldp_th}, it remains to be shown that  
\begin{equation}\label{eq:two_infs}
\inf _{\btheta \in \mathring{Y}_*}I(\btheta) = \inf_{\btheta \in \overline{Y_*}}I(\btheta).
\end{equation}
Thanks to the continuity of the maps $\bLMP\rvert_{\mathring{\bTheta}_k}$, the set $Y_*$ is open, since
\begin{align}\label{eq:union1}
Y_* = &\bigcup_{k=1}^M \mathring{\bTheta_k} \cap Z 
=\bigcup_{k=1}^M \bigl(\mathring{\bTheta_k} \cap  \bigcup_{i=1}^n\{\ \LMP_i\rvert_{\mathring{\bTheta}_k}(\btheta) \notin [\alpha_i^{-},\alpha_i^{+}] \}\bigr)\\
=&\bigcup_{k=1}^M \Bigr(\mathring{\bTheta_k} \cap  \bigcup_{i=1}^n
\{\bCtildek_i \btheta + \bctildek_i \notin [\alpha_i^{-},\alpha_i^{+}]\}\Bigl)\\
%
%
=&\bigcup_{k=1}^M \bigcup_{i=1}^n \Bigr(\mathring{\bTheta_k} \cap  
(\{\bCtildek_i \btheta + \bctildek_i < \alpha_i^-\} \cup \{\bCtildek \btheta + \bctildek > \alpha_i^+\})\Bigl).
%
\end{align}
Therefore,  $\mathring{Y_*} = Y_*$, $\overline{\mathring{Y_*}}  = \overline{Y_*}\supseteq Y_*$ and Eq.~\eqref{eq:two_infs} follows from the continuity of $I(\btheta)$.
\end{proof}
Proposition~\ref{prop:ldp} allows us approximate the probability of a price spike, for small $\eps$, as 
\begin{equation}\label{eq:ldp_approx}
\Prob (\btheta_{\eps} \in Y) \approx \exp\,\Bigl(\frac{ -\inf_{\btheta \in Y} I(\btheta)}{\eps}\Bigr), 
\end{equation}
as it is done in~\cite{Nesti2019temperature,Nesti2018emergent} in the context of studying the event of transmission line failures.
Moreover, the minimizer of the optimization problem~\eqref{eq:ldp_th} corresponds to the most likely realization of uncontrollable generation that leads to the rare event. 
Furthermore, the structure of the problem~\eqref{eq:ldp_th}  allows us to efficiently rank nodes in terms of their likelihood to experience a price spike, as we illustrate in Section~\ref{s:numerics}.

\section{Solving the optimization problem}\label{s:optimization}
In view of Proposition~\ref{prop:ldp}, in order to study
$\lim_{\eps \to 0} \eps \log \Prob (\btheta_{\eps} \in Y(\balpha^-,\balpha^+))$ 
we need to solve the deterministic optimization problem
$\inf_{\btheta \in Y_{*}} I(\btheta).$ The latter, in view of Theorem~\ref{th:mpt1} and the definition of $Y_*$, the latter is equivalent to
\begin{align*}
\inf_{\btheta \in Y_{*}} I(\btheta) = 
&\min_{k=1,\ldots,M} \inf_{\btheta \in \mathring{\Theta}_k \cap Z} I(\btheta)
=\min_{i=1,\ldots,n }\min_{k=1,\ldots,M}  \inf_{\btheta \in \mathring{\Theta}_k, \bCtildek_i \btheta + \bctildek_i\notin [\alpha_i^-,\alpha_i^+]} I(\btheta).
\end{align*}
This amounts to solving at most $nM$ quadratic optimization problems of the form $\inf_{\btheta \in T_{i,k}} I(\btheta)$
for $i=1,\ldots,n$, $k=1,\ldots,M$, where 
 \[ T_{i,k} = T^-_{i,k} \sqcup T^+_{i,k},\quad 
 T^-_{i,k} = \mathring{\Theta}_k \cap \ \{ \bCtildek_i \btheta + \bctildek_i< \alpha_i^- \},\quad
  T^+_{i,k} =\mathring{\Theta}_k \cap \{ \bCtildek_i \btheta + \bctildek_i> \alpha_i^+ \}.\] 

In the rest of this section, we show how we can significantly reduce the number of optimization problems that need to be solved by exploiting the geometric structure of the problem. 
First, since
\begin{align*}
\inf_{\btheta \in Y_{*}} I(\btheta) 
=&\min_{i=1,\ldots,n }\inf_{\btheta \in \bigcup_{k=1}^M T_{i,k}} I(\btheta),
\end{align*}
we fix $i=1\ldots, n$ and consider the sub-problems
\begin{equation}\label{eq:problem_fixed_i} 
\inf_{\btheta \in \bigcup_{k=1}^M T_{i,k}} I(\btheta) = 
\min\,\Bigl\{ 
\inf_{\btheta \in \bigcup_{k=1}^M T^-_{i,k}} I(\btheta),
\inf_{\btheta \in \bigcup_{k=1}^M T^+_{i,k}} I(\btheta)
\Bigr\}.
\end{equation}
The reason why we want to solve the problems in Eq.~\eqref{eq:problem_fixed_i} individually for every $i$ is because we are not only interested in studying the overall event $Y$, but also in the more granular events of node-specific price spikes. For example, this would allow us to rank the nodes in terms of their likelihood of experiencing a price spike (see Section~\ref{s:numerics}).
\temphiddennote{There may be something smarter that can be done in order to only solve the original problem, by reversing the order or the discrete minimums.}
Define 
\begin{align*}
&L^-_{(i,k)} :=  \bTheta_k \cap (T^-_{i,k})^{\complement}= 
\bTheta_k \cap \{\bCtildek_i \btheta + \bctildek_i \ge \alpha_i^{-}\},\quad
L_i^- := \bigcup_{k=1}^M  L^-_{i,k} = \bTheta \cap \{\LMP_i \ge \alpha_i^-\},\\
& L^+_{(i,k)} :=  \bTheta_k \cap (T^+_{i,k})^{\complement}= 
\bTheta_k \cap \{\bCtildek_i \btheta + \bctildek_i \le\alpha_i^{+}\},\quad
L_i^+ := \bigcup_{k=1}^M  L^+_{i,k} = \bTheta \cap \{\LMP_i \le \alpha_i^+\},
\end{align*}
and consider the partition of the sets $L_i^+$ and $L_i^-$ into disjoint closed connected components, i.e.,
\begin{equation}
L_i^-= \bigsqcup_{\ell \in \text{conn. comp. of } L_i^- } W^{(i,-)}_{\ell},\,
L_i^+= \bigsqcup_{\ell \in \text{conn. comp. of } L_i^+ }  W^{(i,+)}_{\ell},
\end{equation}
and let  $W^{(i,-)}_{\ell^{-*}},W^{(i,+)}_{\ell^{+*}}$ be the components containing $\bmuth$.
Since $\partial(A \cup B) = \partial A \cup \partial B$ if $\overline{A}\cap B = A \cap \overline{B} = \emptyset$, the boundary $\partial L_i^+ = \bigsqcup_{\ell \in \F^+_{i}} \partial W^{(i,+)}_{\ell}$ is the union of the set 
of parameters $\btheta\in\bTheta$ such that $\bLMP(\btheta)=\alpha_i^+$ 
with, possibly, a subset of the boundary of $\bTheta$ (and similarly for $\partial L_i^-$).

As stated by Proposition~\ref{prop:opt_1}, we show that, in order to solve the two problems in the right hand side of Eq.~\eqref{eq:problem_fixed_i}
we need to look only at the boundaries
$\partial W^{(i,-)}_{\ell^*},\partial W^{(i,+)}_{\ell^*}$.

\begin{proposition}\label{prop:opt_1}
Under the same assumptions of Theorem~\ref{th:mpt1}, we have
\begin{align}
&\inf_{\btheta \in \bigcup_{k=1}^M T^{+}_{i,k}} I(\btheta) = 
\inf_{\btheta \in \partial \bigcup_{k=1}^M T^{+}_{i,k}} I(\btheta), \quad
 \inf_{\btheta \in \bigcup_{k=1}^M T^{-}_{i,k}} I(\btheta) = 
\inf_{\btheta \in \partial \bigcup_{k=1}^M T^{-}_{i,k}} I(\btheta).
\label{eq:opt_1}
\end{align}
Moreover, \begin{align}
&\inf_{\btheta \in \bigcup_{k=1}^M T^{+}_{i,k}} I(\btheta) = 
\inf_{\btheta \in \partial W^{(i,+)}_{\ell^{*+}}} I(\btheta),
\quad
\inf_{\btheta \in \bigcup_{k=1}^M T^{-}_{i,k}} I(\btheta) = 
\inf_{\btheta \in \partial W^{(i,-)}_{\ell^{*-}}} I(\btheta).\label{eq:lake_minus}
\end{align}
\end{proposition}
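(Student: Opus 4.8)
The plan is to reduce both identities to a single convexity mechanism. I would use three facts: $I$ is strictly convex with unique global minimizer $\bmuth$, where $I(\bmuth)=0$; the hypothesis $\bLMP(\bmuth)\in\mathring{\Pi}$ gives $\LMP_i(\bmuth)<\alpha_i^+$, so $\bmuth$ lies outside the target set $U:=\bigcup_{k=1}^M T^+_{i,k}$; and $\btheta\mapsto\LMP_i(\btheta)$ is continuous on the convex compact set $\bTheta$ by Theorem~\ref{th:mpt1}. The engine of the proof is a radial retraction toward $\bmuth$: for $\btheta\in U$ set $\gamma(t)=(1-t)\btheta+t\bmuth$, $t\in[0,1]$, which stays in $\bTheta$ by convexity; then $t\mapsto I(\gamma(t))$ is convex and attains its minimum at the endpoint $t=1$, hence is non-increasing, so sliding from $\btheta$ toward $\bmuth$ never increases $I$.

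For \eqref{eq:opt_1} I would prove the two inequalities separately. Since $\partial U\subseteq\overline U$ and $I$ is continuous, $\inf_{\partial U}I\ge\inf_{\overline U}I=\inf_U I$. For the reverse, fix $\btheta\in U$: because $\LMP_i(\gamma(0))>\alpha_i^+$ and $\LMP_i(\gamma(1))<\alpha_i^+$, the intermediate value theorem forces $\gamma$ to meet the level set $\{\LMP_i=\alpha_i^+\}\subseteq\partial U$ at some $b$ between $\btheta$ and $\bmuth$, and the retraction gives $I(b)\le I(\btheta)$; taking the infimum over $\btheta$ yields $\inf_{\partial U}I\le\inf_U I$. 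To avoid the zero-measure gaps that $\bTheta_{\circ}$ carves inside $U$, I would run this crossing argument on the relatively open set $V:=\bTheta\cap\{\LMP_i>\alpha_i^+\}$, which differs from $U$ by a Lebesgue-null set and has the same closure, so that $\inf_U I=\inf_V I$ by continuity.

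For the sharper identity \eqref{eq:lake_minus} the additional work is to pin down the connected component on which the retraction lands. Given $\btheta\in U$, let $t^*=\sup\{t\in[0,1]:\LMP_i(\gamma(t))=\alpha_i^+\}$; then $\LMP_i\le\alpha_i^+$ on $\gamma([t^*,1])$, so $\gamma(t^*)$ is joined to $\bmuth$ inside $L_i^+$ and hence lies in the component $W^{(i,+)}_{\ell^{*+}}$ containing $\bmuth$, while $\LMP_i(\gamma(t^*))=\alpha_i^+$ places it on $\partial W^{(i,+)}_{\ell^{*+}}$; monotonicity again gives $I(\gamma(t^*))\le I(\btheta)$, so $\inf_{\partial W^{(i,+)}_{\ell^{*+}}}I\le\inf_U I$. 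To close the gap I would show no other component can do better: for $q\in\partial W^{(i,+)}_{\ell}$ with $\ell\ne\ell^{*+}$, the segment from $q$ to $\bmuth$ links two distinct components of $L_i^+$, so it must traverse $V$ and re-enter $L_i^+$ at a point $r\in\partial W^{(i,+)}_{\ell^{*+}}$ lying between $q$ and $\bmuth$, whence $I(r)\le I(q)$. Together with the decomposition $\partial L_i^+=\bigsqcup_{\ell}\partial W^{(i,+)}_{\ell}$ and \eqref{eq:opt_1}, this isolates the component of $\bmuth$. The statements for $T^-_{i,k}$ and $L_i^-$ are symmetric, with $>$, $<$ and $\alpha_i^+$, $\alpha_i^-$ interchanged.

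The step I expect to be the main obstacle is the boundary bookkeeping. As already observed before the proposition, $\partial W^{(i,+)}_{\ell^{*+}}$ may contain, in addition to the active level set $\{\LMP_i=\alpha_i^+\}$, a piece of $\partial\bTheta$: on the active part every point is a limit of points of $V\subseteq\overline U$, so $I\ge\inf_U I$ there, but on the feasibility-boundary part this is not automatic. I would dispose of it by noting that a minimizer of $I$ over $\overline U$ lying in $\mathring{\bTheta}$ but off $\{\LMP_i=\alpha_i^+\}$ would be an interior local minimum of $I$, forcing $\btheta=\bmuth\notin\overline U$, a contradiction; hence the infimum is attained on the active level set and the $\partial\bTheta$ pieces never fall below $\inf_U I$. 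Throughout, the only delicate point is to keep every segment and continuity argument on $V$ rather than $U$, so that the piecewise-affine continuity of $\LMP_i$, which can fail only on the null set $\bTheta_{\circ}$, is available where it is used.
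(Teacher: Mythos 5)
Your strategy coincides with the paper's own proof: strict convexity of $I$, the normalization $I(\bmuth)=0$ with $\bmuth$ lying outside the spike set, and intermediate-value crossings along segments pointing at $\bmuth$. Write $U:=\bigcup_{k=1}^M T^{+}_{i,k}$ and $V:=\bTheta\cap\{\LMP_i>\alpha_i^+\}$. Your proof of \eqref{eq:opt_1} is correct, and your two refinements (the monotone retraction, and working with $V$ so that the null set $\bTheta_{\circ}$ is harmless) are tidier than the paper's contradiction argument. Two small repairs are needed: the inclusion $\{\LMP_i=\alpha_i^+\}\subseteq\partial U$ is not literally true (a point where $\LMP_i$ attains a local maximum exactly equal to $\alpha_i^+$ need not be approachable from $V$), but the first crossing along your segment is a limit of $V$-points and hence does lie in $\partial U$; similarly, your last-crossing time $t^*$ can land in the \emph{interior} of $W^{(i,+)}_{\ell^{*+}}$ if the segment touches the level set from below, which is fixed by using instead the first entry time into the closed set $W^{(i,+)}_{\ell^{*+}}$, whose left-limits lie outside that set.

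The genuine gap is exactly the step you flag as the main obstacle, and your proposed disposal does not close it. Knowing that the minimizer of $I$ over $\overline{U}$ lies on the active level set constrains only that minimizer; it gives no lower bound for $I$ on $\partial W^{(i,+)}_{\ell^{*+}}\cap\partial\bTheta$, which in general is not contained in $\overline{U}$ at all, so the required inequality $\inf_{\partial W^{(i,+)}_{\ell^{*+}}}I\ge\inf_U I$ does not follow. Worse, it cannot follow: with the ambient boundary (the one the paper uses, since it explicitly allows $\partial W^{(i,+)}_{\ell}$ to contain pieces of $\partial\bTheta$), the equality \eqref{eq:lake_minus} is false. Take $\bTheta=[-1,1]^2$ with a single critical region, $\LMP_i(\btheta)=\theta_1+\theta_2$, $\bmuth=(0,0)$, $\bSigmath=\bI_2$, $\alpha_i^+=1.8$: then $\inf_U I=0.81$, attained at $(0.9,0.9)$, whereas $W^{(i,+)}_{\ell^{*+}}=\bTheta\cap\{\theta_1+\theta_2\le 1.8\}$ and its boundary contains the point $(1,0)$, where $I=0.5$. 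The statement that is true, and that your crossing argument (like the paper's) does establish, replaces $\partial W^{(i,+)}_{\ell^{*+}}$ by its active part $W^{(i,+)}_{\ell^{*+}}\cap\overline{V}$, i.e.\ the boundary of $W^{(i,+)}_{\ell^{*+}}$ relative to $\bTheta$: the forward inequality comes from the first-entry point, and the reverse inequality is then immediate because such points lie in $\overline{V}=\overline{U}$. You should know that the paper's own proof shares this defect — it shows only that the minimizer over $\partial U$ lies on $\partial W^{(i,+)}_{\ell^{*+}}$ and never addresses the reverse inequality — so your instinct about where the difficulty sits was accurate, even though your patch ("the $\partial\bTheta$ pieces never fall below $\inf_U I$") is a non sequitur rather than a proof.
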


\begin{proof}
First note that the rate function $I(\btheta)$ is a (strictly) convex function, since $\bSigmath$ is positive definite.
Since $\bigcup_{k=1}^M T^{+}_{i,k}$ is open and $I(\btheta)$ is a continuous function, it holds that
\[\inf_{\btheta \in \bigcup_{k=1}^M T^{+}_{i,k}} I(\btheta) = 
\inf_{\btheta \in  \overline{\bigcup_{k=1}^M T^{+}_{i,k}}} I(\btheta) .\]
Moreover, since $I(\btheta)$ is continuous and
$\overline{\bigcup_{k=1}^M T^{+}_{i,k}}$ compact, the infimum is attained. 
The fact that $\overline{\bigcup_{k=1}^M T^{+}_{i,k}}\supseteq  \partial \bigcup_{k=1}^M T^{+}_{i,k}$ immediately implies that 
\[\inf_{\btheta \in \bigcup_{k=1}^M T^{+}_{i,k}} I(\btheta) \le
\inf_{\btheta \in \partial \bigcup_{k=1}^M T^{+}_{i,k}} I(\btheta).\]
On the other hand, assume by contradiction that
\[\inf_{\btheta \in \bigcup_{k=1}^M T^{+}_{i,k}} I(\btheta) <
\inf_{\btheta \in \partial \bigcup_{k=1}^M T^{+}_{i,k}} I(\btheta).\]
In particular, there exists a point $\btheta_0$ in the interior of $\bigcup_{k=1}^M T^{+}_{i,k}$ such that  $I(\btheta_0)<I(\btheta)$ for all $\btheta \in \bigcup_{k=1}^M T^{+}_{i,k}$. 
Define, for $t\in[0,1]$, the line segment joining $\bmuth$ and $\btheta_0$, i.e. $\btheta_{t} = (1-t)\bmuth + t \btheta_0$.
 Since $\btheta_0$ lies in the interior of $\bigcup_{k=1}^M T^{+}_{i,k}$, and $\bmuth\notin \bigcup_{k=1}^M T^{+}_{i,k}$, there exist a $0< t_*<1$ such that $\btheta_t \in \bigcup_{k=1}^M T^{+}_{i,k}$ for all $t\in [t_*,1]$.
 Due to the convexity of $I(\btheta)$, and the fact that $I(\bmuth)=0$,
 we have 
 \[I(\btheta_{t_*}) < (1-{t_*}) I(\bmuth) + {t_*} I(\btheta_0) = {t_*} I(\btheta_0) < I(\btheta_0),\] thus reaching a contradiction.
 Hence, \begin{equation}
\inf_{\btheta \in \bigcup_{k=1}^M T^{+}_{i,k}} I(\btheta) = 
\inf_{\btheta \in \partial \bigcup_{k=1}^M T^{+}_{i,k}} I(\btheta),
\end{equation} 
and the minimum is achieved on $ \partial \bigcup_{k=1}^M T^{+}_{i,k}$, proving Eq.~\eqref{eq:opt_1}.


In view of Eq.~\eqref{eq:opt_1}, in order to prove Eq.~\eqref{eq:lake_minus} it is enough to show that 
\[\inf_{\btheta \in \partial \bigcup_{k=1}^M T^{+}_{i,k}} I(\btheta)=
\inf_{\btheta \in \partial W^{(i,+)}_{\ell^{*+}}} I(\btheta).\]

Given that the sets  $T^{+}_{i,k}=\mathring{\bTheta}_k \cup \{\LMP_i(\btheta)>\alpha_i^+\}$, for $k=1,\ldots,M$, are disjoint, the boundary of the union is equal to the union of the boundaries, i.e., 
$\partial \bigcup_{k=1}^M T^{+}_{i,k}  =  \bigcup_{k=1}^M \partial T^{+}_{i,k}.$ 
Each term $\partial T^{+}_{i,k}$ is the boundary of the polytope 
$\overline{T^{+}}_{i,k} = \bTheta_k \cap \{\LMP_i \ge \alpha_i^+\}$, and thus 
consists of the union of a subset of $\bigcup_{k=1}^M \partial \bTheta_k$ 
(a subset of the union of the facets of the polytope $\bTheta_k$) with the segment $\bTheta_k \cap \{\LMP_i = \alpha_i^+\}.$
As a result, $\partial \bigcup_{k=1}^M T^{+}_{i,k} I(\btheta)$
intersects $\partial W^{(i,+)}_{\ell^{*+}}$ in $\bTheta \cap \{\LMP_i=\alpha_i^+\}$.

We now show that (i) the minimum is attained at a point $\btheta_0$ such that $\LMP_i(\btheta_0) = \alpha_i^+$, so that
 $\btheta_0 \in \bigsqcup_{\ell \in \text{conn. comp. of } L_i^+ } \partial W^{(i,+)}_{\ell}$, 
 and (ii) $\btheta_0\in  \partial W^{(i,+)}_{\ell^{*+}}$.
Assume by contradiction that $\LMP_i(\btheta_0)>\alpha_i^+$, and consider the line segment joining $\bmuth$ and $\btheta_0$, $\btheta_{t} = (1-t)\bmuth + t \btheta_0, \, t\in[0,1]$.
The function
\[ [0,1] \ni t \to g(t) := \LMP_i (\btheta_t) = \LMP_i ((1-t)\bmuth + t \btheta_0) \in \Real,\]
 is continuous and such that $g(0) = \bLMP(\bmuth)< \alpha$ and
  $g(1)=\bLMP(\btheta_0)>\alpha_i^*$. Thanks to the intermediate value theorem, there exists a $0<t_*<1$ 
  such that $g(t_*)=\LMP_i(\btheta_{t_*})=\alpha_i^*$, and 
 \[I(\btheta_{t_*}) < (1-t_*) I(\bmuth) + t_* I(\btheta_0) =  t_* I(\btheta_0) < I(\btheta_0),\]
which is a contradiction, since $\btheta_0$ is the minimum.
 Therefore, 
  $\btheta_0\in \bigsqcup_{\ell \in \text{conn. comp. of } L_i^+ } \partial W^{(i,+)}_{\ell}$.  The same argument, based on the convexity of the rate function and the fact that $I(\bmuth)=0$, shows that $\btheta\in \partial W^{(i,+)}_{\ell^{*,+}}$. 
Lastly, Eq.~\eqref{eq:lake_minus} can be derived in the same way.
\end{proof}

Proposition~\ref{prop:opt_1} shows that in order to solve the problem in Eq.~\eqref{eq:problem_fixed_i} we only need to look at the boundaries
$\partial W^{(i,+)}_{\ell^*}, \partial W^{(i,-)}_{\ell^*}$. Determining such boundaries is a non-trivial problem, for which dedicated algorithms exist. However, such algorithms are beyond the scope of this paper and we refer the interested reader to the contour tracing literature and, in particular, to~\cite{Dobkin1990contour}.

\section{Numerics}\label{s:numerics}
In this section, we illustrate the potential of our large deviations approach using the standard IEEE 14-bus test case from MATPOWER~\cite{Zimmerman2011}. This network consists of 14 nodes housing loads, 6 controllable generators, and 20 lines. As line limits are not included in the test case, we set them as $\bfpluslimit = \lambda \bfpluslimit^{(\text{planning)}}$, where $ \bfpluslimit^{(\text{planning})} :=  \gamma_{\text{line}} |\bflow|$, $\bflow$ is the solution of a DC-OPF using the data in the test file, and $\gamma_{\text{line}}\ge 1$. 
We interpret $\bfpluslimit^{(\text{planning})}$ as the maximum allowable power flow before the line trips, while $\lambda$ is a safety tuning parameter satisfying $1/\gamma_{\text{line}}\le\lambda\le 1$.
In the rest of this section, we set $\gamma_{\text{line}}=2$ and $\lambda=0.6$. 

We add two uncontrollable renewable generators at nodes $4$ and $5$, so that $n_d=14,n_g=6$ and $n_{\theta}=2$. 
All the calculations related to multiparametric programming are performed using the MPT3 toolbox~\cite{MPT3}. 
We model the renewable generation as a 2-dimensional Gaussian  random vector $\btheta\sim \mathcal{N}_2(\bmuth,\bSigmath)$, where $\bmuth$ is interpreted as the nominal, or forecast, renewable generation. The covariance matrix $\bSigmath$ is calculated as in~\cite{Hoffmann2019consistency} to model positive correlations between neighboring (thus geographically ``close'') nodes. More specifically, we consider normalized symmetric graph Laplacian $L_{\text{sym}} = \Delta ^{-1/2} L_{\text{sym}} \Delta^{-1/2},$ where
$\Delta\in\Real^{n\times n}$ is the diagonal matrix with entries equal to $\Delta_{i,i}=\sum_{j\neq i} \weight_{i,j}$. We then compute the matrix
\begin{equation}\label{eq:cov_IEEE14}
\bC = \tau^{2\kappa} (L_{\text{sym}}  + \tau^2 I)^{-\kappa}\in\Real^{n\times n},
\end{equation}
for $\kappa=2$ and $\tau^2=1$ as in~\cite{Hoffmann2019consistency}, 
\temphiddennote{write intuition on this matrix} 
\temphiddennote{not sure if it's true that $\kappa$ controls the amount of correlation}
and consider the $n_{\theta}\times n_{\theta}$ submatrix $\btildeSigmath$ obtained from of $\bC$ by choosing rows and columns indexed by $\Nodes_{n_{\theta}}=\{4,5\}$,
and we define $\bSigmath$ as
\begin{equation}
\bSigmath := \diag(\{\delta_i\}_{i=1}^{n_{\theta}}) \,\btildeSigmath\, \diag(\{\delta_i\}_{i=1}^{n_{\theta}})\,\in\Real^{n_{\theta}\times n_{\theta}},
\end{equation}
where the parameters $\delta_i$'s control the magnitudes of the standard deviations $\sigma_{i}:=\sqrt{\bSigmath(i,i)}$, $i=1,2$. In particular, the $\delta_i$'s are chosen 
in such a way that the standard deviations $\sigma_{i}$'s match realistic values for wind power forecasting error expressed as a fraction of the corresponding installed capacity, over different time windows $T$, namely
$\sigma_{i} = q(T) \times \mu_i^{(\text{installed})},\, i=1,2,$
where $q=[0.01,0.018,0.04]$, corresponding to time windows of $5,15$ and $60$ minutes, respectively (see~\cite{Nesti2019temperature}, Section V.B). 
 Finally, the installed capacity of the renewable generators are chosen based on the boundary of the $2$-dimensional feasible space $\bTheta$, namely
 $\mu_1^{(\text{installed})} = \max\{x\,:\, (x,y) \in \bTheta\}, 
 \mu_2^{(\text{installed})} = \max\{y\,:\, (x,y) \in \bTheta\}.$
 Although $\btheta\sim \mathcal{N}_2(\bmuth,\bSigmath)$ is in principle unbounded, we choose the relevant parameters in such a way that, in practice, $\btheta$ never exceeds the
boundary of the feasible space $\bTheta$.
\temphiddennote{comment here about $\eps$ not being present, like in the PRL paper?}
 Since $\bSigmath$ is obtained from realistic values for wind power forecasting error, the question is whether the matrix $\bSigmath$ used in the numerics is close enough to the small-noise regime to make the large deviations results meaningful. As we show, the answer to this question is affirmative, validating the use of the large deviations methodology.


\subsection{Multimodality and sensitivity with respect to $\vect{\mu}_{\vect{\theta}}$}
%
  %
Given $\bmuth \in \bTheta$, we set the price thresholds defining the spike event as $\balpha^{-}_i= \LMP_i(\bmuth) - \errrel |\LMP_i(\bmuth)| ,\,\balpha^{+}_i= \LMP_i(\bmuth) + \errrel |\LMP_i(\bmuth)|,$
where $\errrel>0$. In other words, we are interested in studying the event of a \textit{relative price deviation} of magnitude greater than $\errrel>0$:
{\footnotesize
\begin{equation}
Y=\bigcup_{i=1}^n Y_i, \qquad Y_i = \{\btheta\in\bTheta\,:\,
|\LMP_i(\btheta) -\LMP_i(\bmuth)| > \errrel  |\LMP_i(\bmuth)|\}.
\end{equation}
}

%

\temphiddennote{ridirlo dopo: In particular, we show how the results are extremely sensitive to the location of the forecast renewable generation $\bmuth$.}
Next, we
 consider two scenarios, corresponding to low and high expected wind generation, i.e.
 $\bmuth^{(\text{low})} = 0.1 \times\bmu^{(\text{installed})}$, $\bmuth^{(\text{high})} = 0.5 \times\bmu^{(\text{installed})}$,and variance corresponding to the 15-minute window, i.e. $q^{(\text{medium})} = 0.018$.
Fig.~\ref{fig:panel} shows the location of $\bmuth$, together with $10^6$ samples from $\btheta$, the corresponding empirical density of the random variable $\LMP_{10}$ obtained through Monte Carlo simulation, and a visualization of the piecewise affine mapping $\btheta\to \bLMP_{10}(\btheta)$.

We observe that the results are extremely sensitive to the standard deviation and location of the forecast renewable generation $\bmuth$ relative to the geometry of the critical regions, as this 
affects whether the samples of $\btheta$ will cross the boundary between adjacent regions or not. 
In turn, the crossing of a boundary can result in the distribution of the LMPs being \textit{multimodal} (see Fig.~\ref{fig:panel}, left panels), due to the piecewise affine nature of the map $\btheta\to \bLMP$. This observation shows how the problem of studying LMPs fluctuations is intrinsically harder than that of emergent line failures, as in~\cite{Nesti2017line,Nesti2018emergent}.
The phenomenon is more pronounced in the presence of steep gradient changes at the boundary between regions (or in the case of discontinuities), as can be observed in the right panels of Fig.~\ref{fig:panel}, which show the piecewise affine map $\btheta\to \bLMP_{10}(\btheta)$ for the two different choices of $\bmuth$.
In particular, the expected $\bLMP$ can differ greatly from $\bLMP(\bmuth)$. 
   

  \begin{figure}[!h]
       \centering
\begin{subfigure}[t]{\textwidth}
\includegraphics[width=0.32\textwidth]{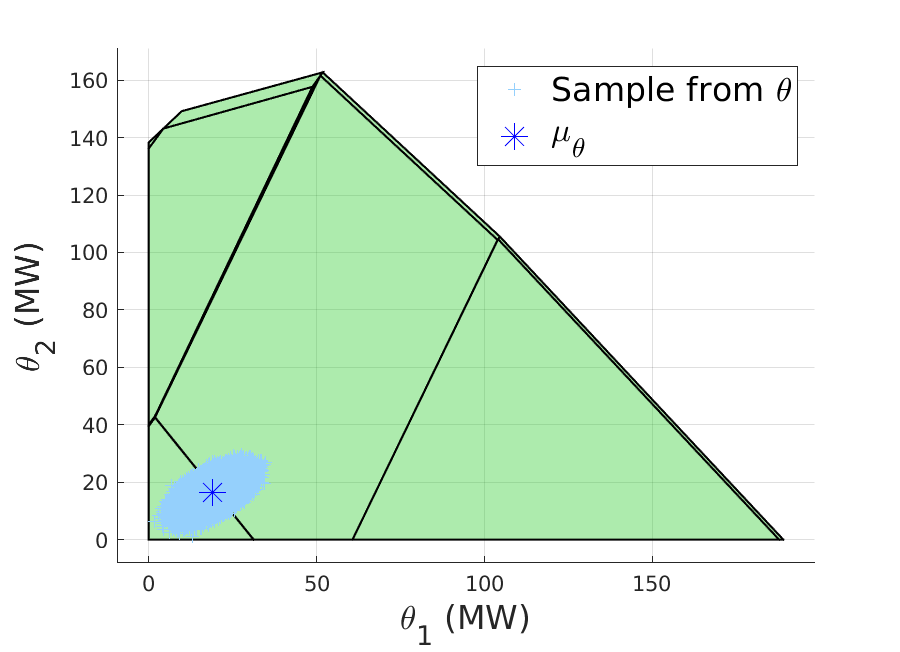} 
        %
        \includegraphics[width=0.32\textwidth]{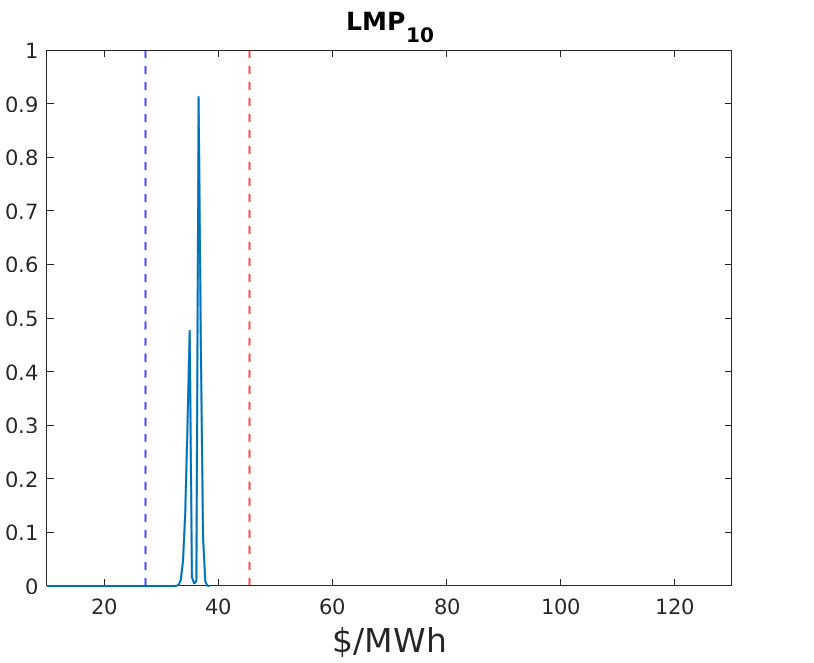}
                  %
\includegraphics[width=0.32\textwidth]{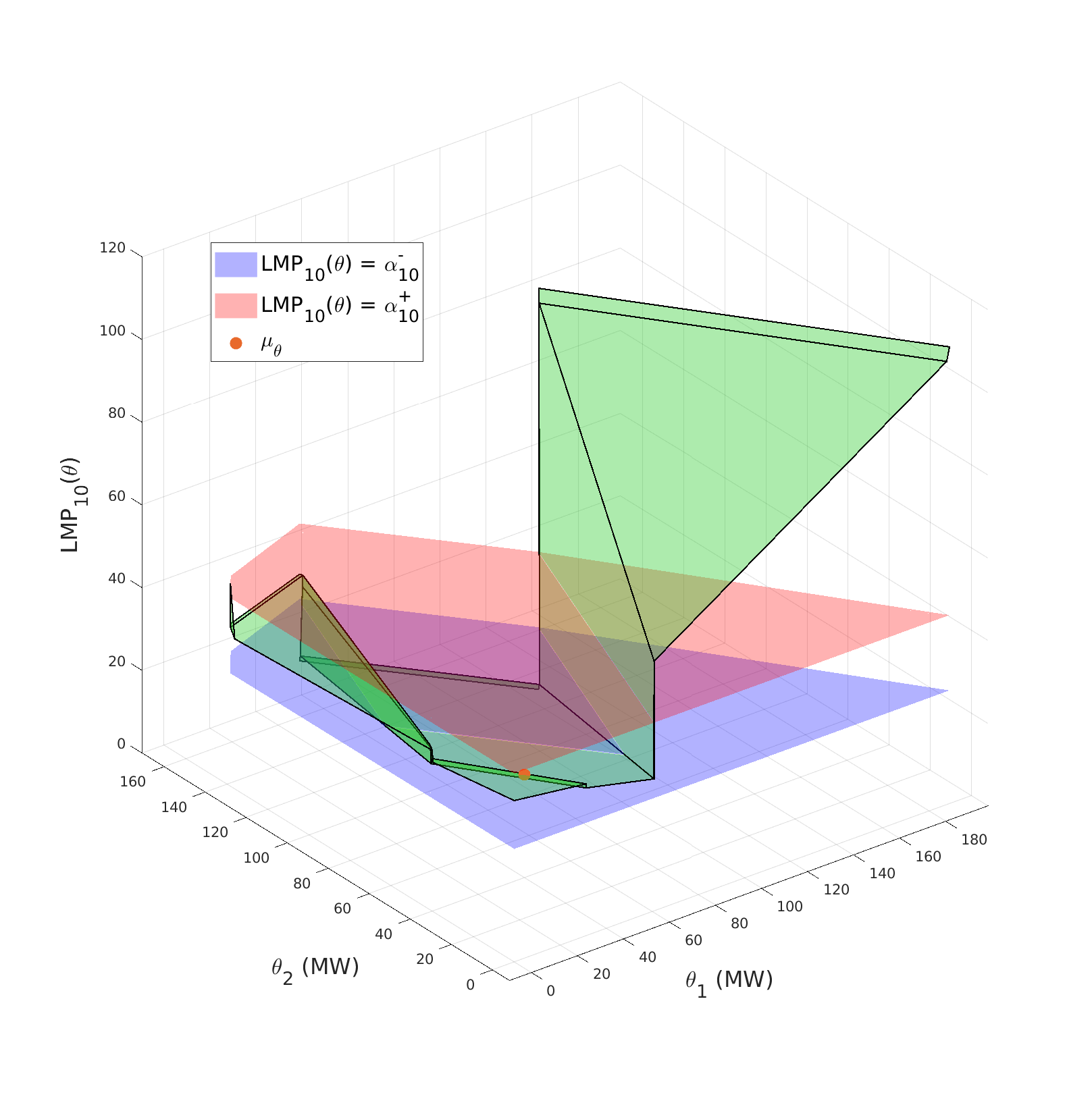}
\caption{\footnotesize $\bmuth^{(\text{low})} = 0.1 \times\bmu^{(\text{installed})},\errrel=0.25, q=0.018$.} 
 \label{panel_a}         
        \end{subfigure}
                 %
                 \vspace{-0.25cm}
 \begin{subfigure}[t]{\textwidth}
\includegraphics[width=0.32\textwidth]{
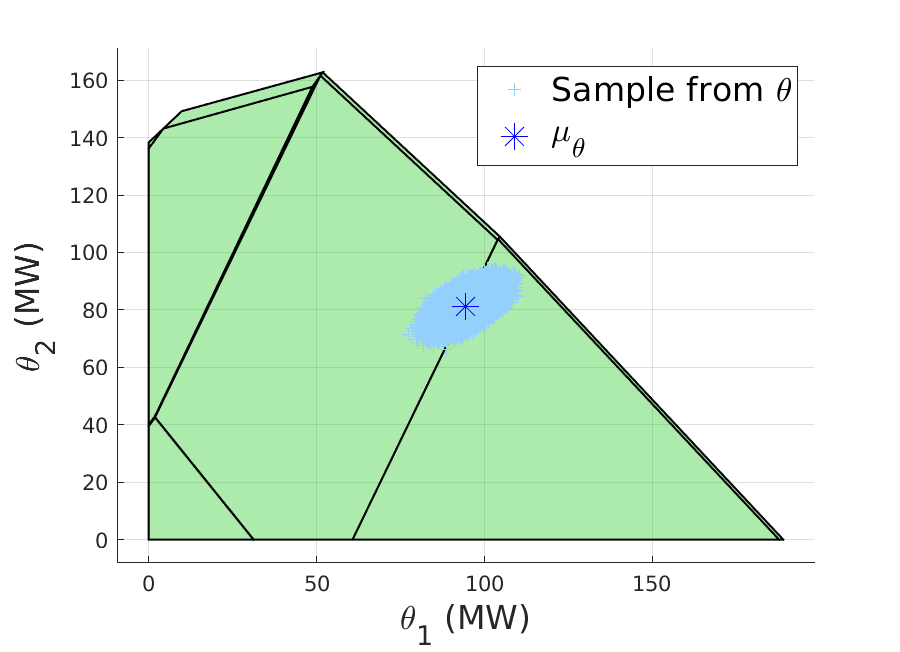}            
%
        \includegraphics[width=0.32\textwidth]{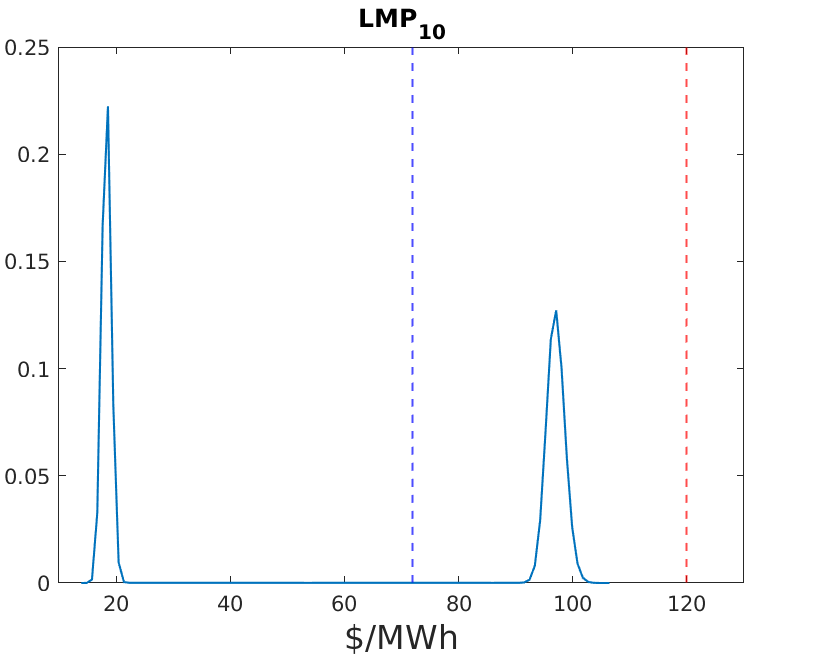}
\includegraphics[width=0.32\textwidth]{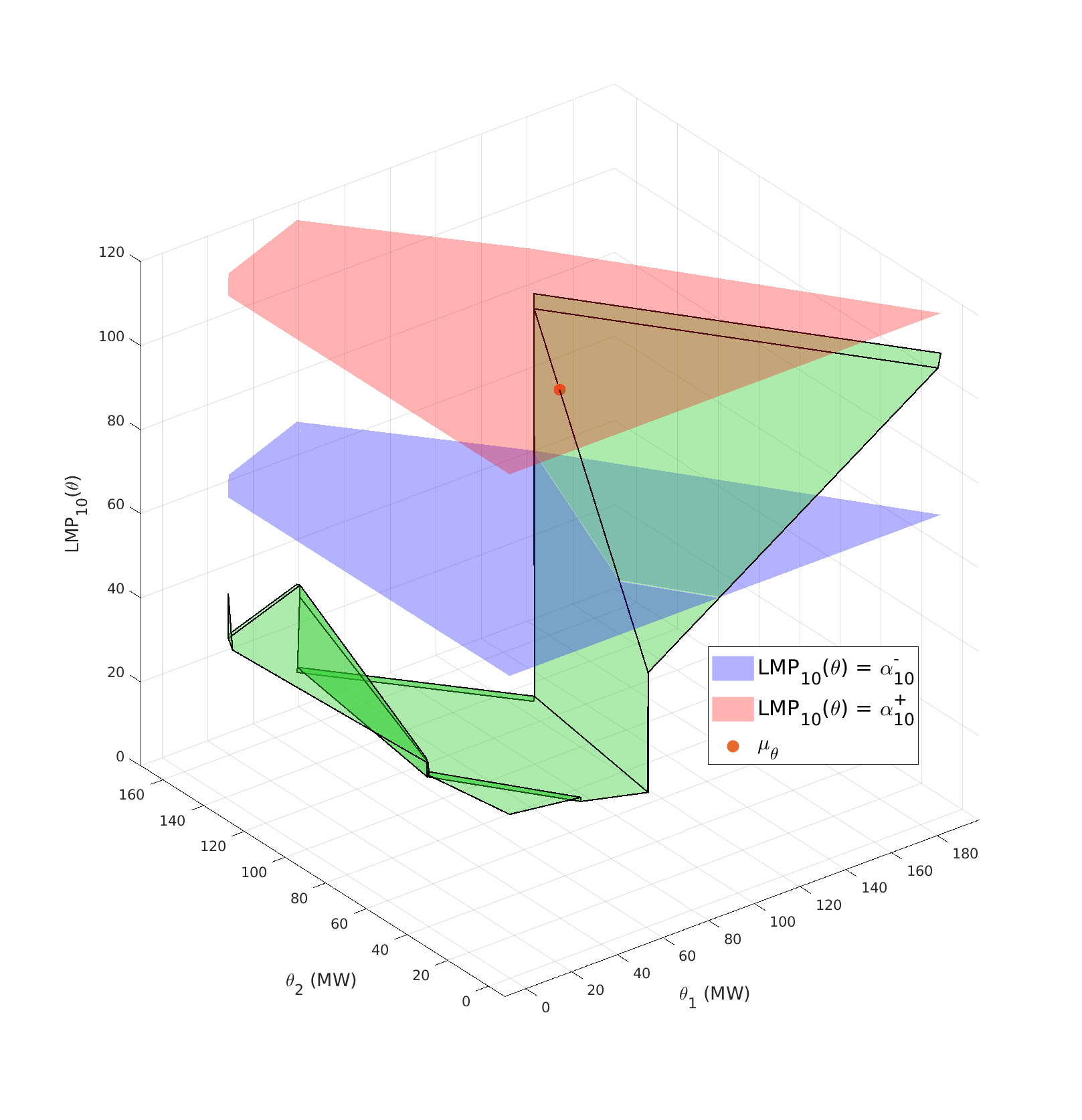}
                \caption{\footnotesize $\bmuth^{(\text{high})} = 0.5 \times\bmu^{(\text{installed})},\errrel=0.25, q=0.018$.}
                \label{panel_b}         
         \end{subfigure}
            %
 \caption{\footnotesize  forecast generation $\bmuth$ and empirical distribution of renewable generation $\btheta$ (left); Empirical density of the random variable $\LMP_{10}$, with the thresholds $\alpha^-_i$ and $\alpha^+_i$ represented as red and blue vertical bars, respectively (middle); Piecewise affine map $\btheta\to \bLMP_{10}(\btheta)$ with price thresholds $\alpha^{\pm}_{10}= \LMP_{10}(\bmuth) \pm \errrel |\LMP_{10}(\bmuth)|$, for two different choices of $\bmuth$ (right).
}
 \label{fig:panel}
\end{figure}

  \begin{figure}[!h]
       \centering
       \begin{subfigure}[t]{0.23\columnwidth}
\includegraphics[width=\textwidth]{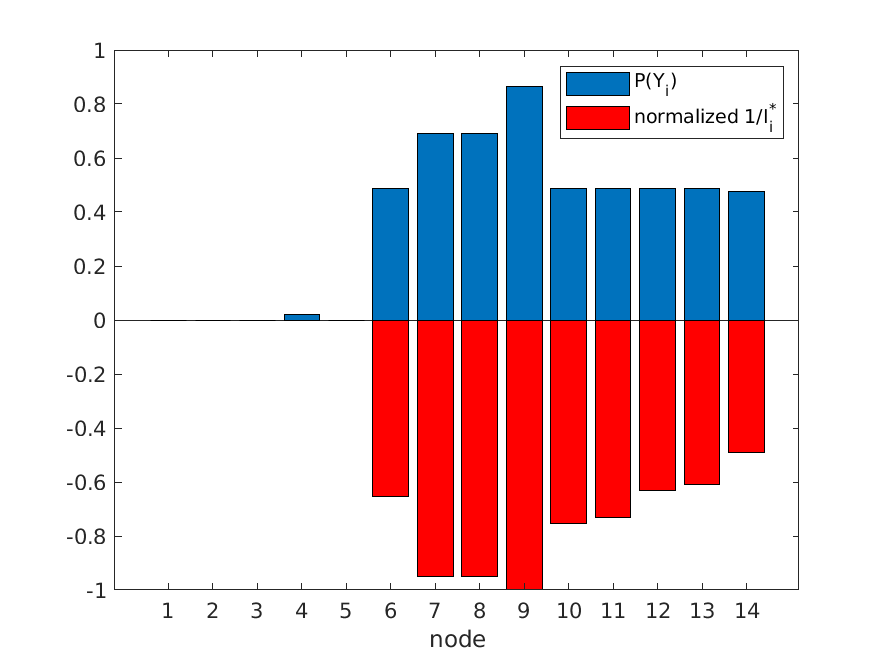}
    \caption{\footnotesize $\errrel=0.25$.}
            \end{subfigure}
            ~
                 \begin{subfigure}[t]{0.23\columnwidth}
\includegraphics[width=\textwidth]{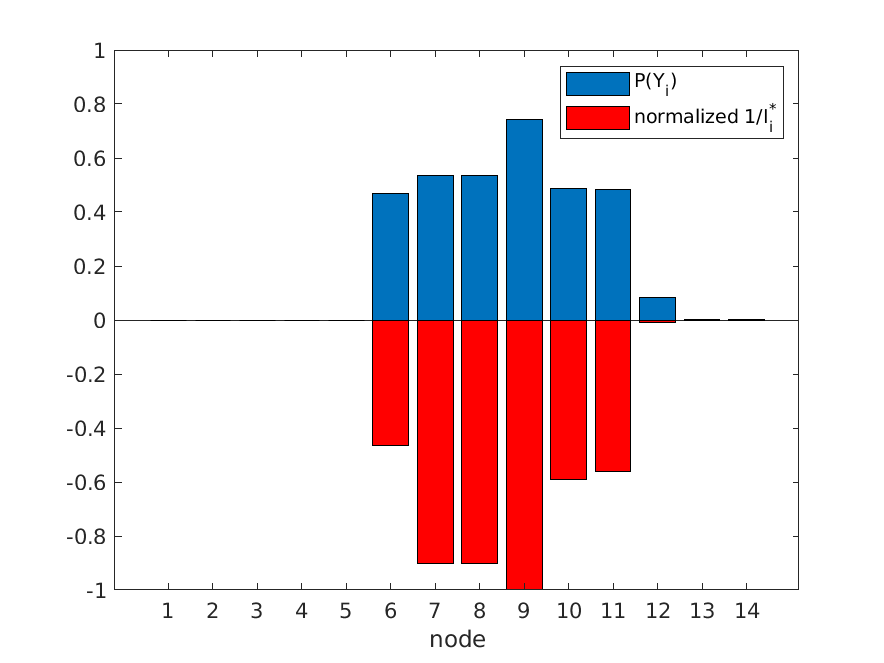}
    \caption{\footnotesize $\errrel=0.5$.}
            \end{subfigure}
                     ~
                 \begin{subfigure}[t]{0.23\columnwidth}
\includegraphics[width=\textwidth]{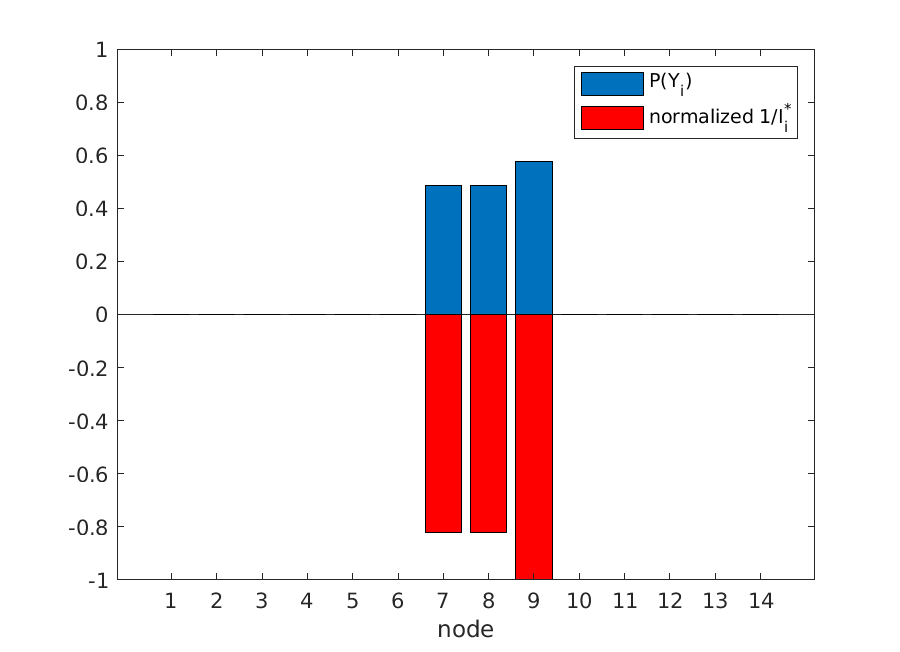}
    \caption{\footnotesize $\errrel=1$.}
            \end{subfigure}
                ~
\begin{subfigure}[t]{0.23\columnwidth}
\includegraphics[width=\textwidth]{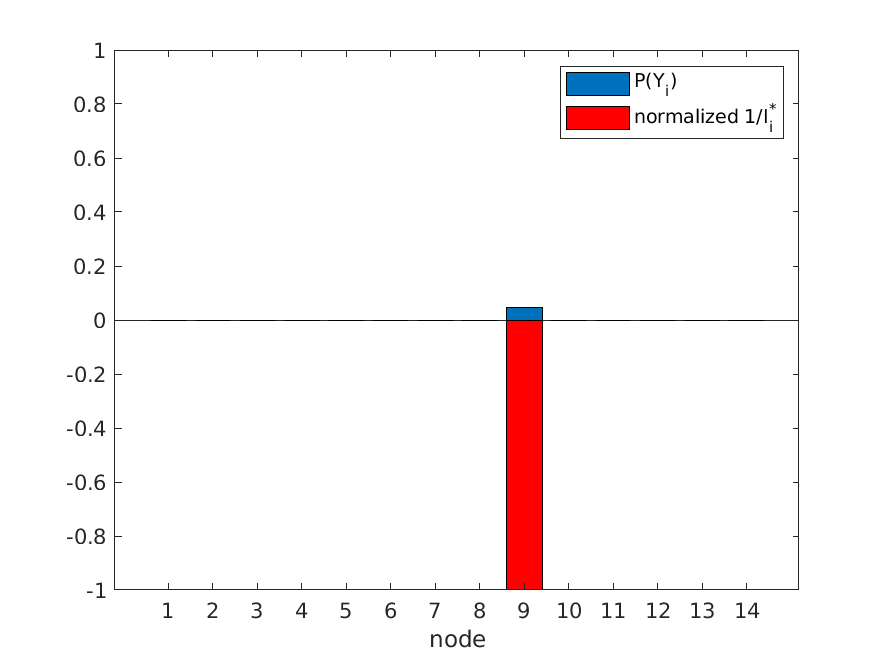}
    \caption{\footnotesize $\errrel=10$.}
            \end{subfigure}
 \caption{Comparison between empirical probabilities $\widehat{\mathbb{P}}(Y_i)$ based on Monte Carlo simulation and normalized decay rates $-\min_{i} I^*_i/I^*_i$ for various level of $\errrel.$}
 \label{fig:_bars}
\end{figure}

\subsection{Ranking of nodes based on their likelihood of having a price spike}
As illustrated by Eq.~\eqref{eq:problem_fixed_i}, large deviations theory predicts the most likely node to be
$\argmin_{i=1,\ldots,n} I^*_i,$
where  $I^*_i := \inf_{\btheta \in \bigcup_{k=1}^M T_{i,k}} I(\btheta)$. Indirectly, this approach produces also a \textit{ranking} of nodes according to their likelihood of having a price spike. The use of large deviations theory to rank power grid components according to their likelihood of experiencing anomalous deviations from a nominal state has been validated in~\cite{Nesti2018emergent} in the context of transmission line failures.
In order to validate the accuracy of the LDP methodology also for ranking nodes according to the likelihood of their price spikes, we compare the LD-based ranking with the one obtained via crude Monte Carlo simulation, as described in Table~\ref{tab:_ranking_1}. We observe that the LD-based approach is able to recover the exact ranking of nodes, for various levels of relative error $\errrel$.
Table~\ref{tab:_ranking_1} reports the values of the probability $\widehat{\mathbb{P}}(Y_i)$ of a price spike in node $i$, calculated using Monte Carlo simulation, together with the corresponding decay rates  $I^*_i = \inf_{\btheta \in \bigcup_{k=1}^M T_{i,k}} I(\btheta)$, showing that the LD-based approach correctly identifies the ranking.
This property is validated more extensively in Fig.~\ref{fig:_bars}, which depicts the values of 
$\widehat{\mathbb{P}}(Y_i)$ against $-\min_{k} I^*_k/I^*_i$ across a wider range of price thresholds $\errrel$, corresponding to decreasing probability of the price spike event.

 \begin{table}[h!]
\centering
\begin{tabular}{c|c|c|c}
\hline
\hline
$i$ & $ \widehat{\mathbb{P}}(Y_i)$ & $ I^*_i $ & \text{rank}\\
\hline
9   & 8.6371e-01  & 	   8.1160e-04		&  1	 \\ 
8   & 6.8984e-01  & 		   8.5572e-04	&  2  \\   
7   & 6.8984e-01  & 	   8.5572e-04		&  3 \\  
10  & 4.8713e-01  & 		   1.0786e-03	&  4  \\  
11  & 4.8690e-01  & 		   1.1123e-03	&  	5   \\  
6   & 4.8613e-01  & 		   1.2438e-03	&  6   \\ 
12  & 4.8586e-01  & 	   1.2849e-03		& 	7   \\ 
13  & 4.8586e-01  & 	   1.3296e-03		&  	8 \\ 
14  & 4.7559e-01  &      1.6548e-03		&	9 \\ 
4   & 2.1282e-02	  & 	   4.0854e+00		&	  10\\ 
5   & 0 			  & 		   6.8384e+01	&   11 \\     
1   & 0			  & 		 1.1584e+02  	& 	     12\\
2   & 0 			  & 		   1.2971e+02	&		  13 \\  
3   & 0 			  & 	   2.6984e+03		& 	   14\\  
  \hline
\hline
\end{tabular}
\caption{\footnotesize Ranking of nodes based on the likelihood of having a price spike, according to both Monte Carlo simulation (in terms of probabilities $\widehat{\mathbb{P}}(Y_i)$) and large deviations results (in terms of decay rates $ I^*_i $), for the case $\bmuth^{(\text{high})} = 0.5 \times\bmu^{(\text{installed})}$, $\errrel=0.25$, $q=0.018$. The values $\widehat{\mathbb{P}}(Y_i)$, for $i = 1,2,3,5$, are not reported as the Monte Carlo simulation is not sufficiently accurate for such small probabilities. }
\label{tab:_ranking_1}
\end{table}   
%
\section{Concluding remarks and future work}\label{s:conclusion}
In this paper, we illustrate the potential of concepts from large deviations theory to study the events of rare price spikes caused by fluctuations of renewable generation. 
 By assuming a centralized perspective, we are able to use large deviations theory to approximate the probabilities of such events, and to rank the nodes of the power grids according to their likelihood of experiencing a price spike. Our technical approach is able to handle the multimodality of LMP's distributions, as well as violations of the LICQ regularity condition.
 
Future research directions include extending the present framework to non-Gaussian fluctuations, as well as incorporating a source of discrete noise in the form of line outages. Moreover, it would be of interest to study the sensitivity of the approximation in Eq.~\eqref{eq:ldp_approx} with respect to the tuning parameter $\lambda$, which quantifies the conservatism in the choice of the line limits. This would allow us to extend the notion of safe capacity regions~\cite{Nesti2019temperature,Nesti2017line} in the context of energy prices.
An alternative approach to deal with non-Gaussian fluctuations and more involved price spike structures could be to efficiently sample \textit{conditionally} on a price spike to have occurred, a problem for which specific Markov chain Monte Carlo (MCMC) methods have been developed, e.g., the Skipping Sampler~\cite{Moriarty2019}. In the case of a complicated multi-modal conditional distribution, the large deviations results derived in this paper can be of extreme help in identifying all the relevant price spikes modes, thus speeding up the MCMC procedure.
  
\vskip6pt




%

\paragraph{Acknowledgements}
This research is supported by NWO VICI grant 639.033.413, NWO Rubicon grant 680.50.1529 and EPSRC grant EP/P002625/1. The authors would like to thank the Isaac Newton Institute for Mathematical Sciences for support and hospitality during the programme ``The mathematics of energy systems'' when work on this paper was undertaken. This work was supported by EPSRC grant number EP/R014604/1.




%
%
%
%
%

%
\bibliographystyle{abbrv}
\bibliography{BiblioThesisCameraReadyCompletePaper}
\end{document}